\DeclareMathOperator{\Ab}{Ab}
\DeclareMathOperator{\Gal}{Gal}
\DeclareMathOperator{\Tot}{Tot}
\DeclareMathOperator{\Free}{Free}
\DeclareMathOperator{\Gen}{Gen}
\DeclareMathOperator{\Ext}{Ext}
\DeclareMathOperator{\Ind}{Ind}
\DeclareMathOperator{\Cl}{Cl}
\DeclareMathOperator{\divis}{div}
\DeclareMathOperator{\Div}{Div}
\DeclareMathOperator{\rk}{rk}
\DeclareMathOperator{\Sh}{Sh}
\newcommand{\C}{\mathcal C}
\newcommand{\Ei}{\mathcal E}
\newcommand{\Ufl}{U_{\normalfont \textsf{fppf}}}
\newcommand{\fl}{{\normalfont \textsf{fppf}}}
\DeclareMathOperator{\HOM}{\mathscr{H}\text{\kern -2.5pt {\emph{om}}}}
\DeclareMathOperator{\DIV}{\mathscr{D}\text{\kern -1pt {\emph{iv}}}\,}
\newcommand{\K}{\mathcal K}
\newcommand{\Hom}{{\operatorname{Hom}}}
\newcommand{\RHom}{R\Hom}
\newcommand{\RHOM}{R\kern -2pt\HOM}
\newcommand{\mmu}{\pmb{\mu}}
\newcommand{\legendre}[2]{\genfrac{(}{)}{}{}{#1}{#2}}
\newcommand{\Spec}{\normalfont \text{Spec }}
\newcommand{\GG}{\mathbb{G}}
\newcommand{\GmK}{\GG_{m,K}}
\newcommand{\OO}{{\mathcal O}}
\newcommand{\ZZ}{{\mathbb Z}}
\newcommand{\QQ}{{\mathbb Q}}
\newcommand{\RR}{{\mathbb R}}
\newcommand{\CC}{{\mathbb C}}
\newcommand{\LL}{{\mathbb L}}
\newcommand{\Li}{\mathcal{L}}
\newcommand{\cupp}{\smallsmile}
\newcommand{\et}{\textsf{\'et}}
\newtheorem{thm}{Theorem}
\numberwithin{thm}{section}
\newenvironment{customthm}[1]
  {\innercustomthm}
  {\endinnercustomthm}
\newtheorem{prop}[thm]{Proposition}
\newtheorem{lem}[thm]{Lemma}
\newtheorem{cor}[thm]{Corollary}
\theoremstyle{definition}
\newtheorem{df}[thm]{Definition}
\newtheorem{ex}[thm]{Example}
\theoremstyle{remark}
\newtheorem{remark}[thm]{Remark}
\newcommand{\A}{\mathcal{A}}
\newcommand{\pr}{{\normalfont \text{pr}}}
\DeclareMathOperator{\Br}{Br}
\DeclareMathOperator{\Frac}{Frac}
\DeclareMathOperator{\inv}{inv}
\newcommand{\Sch}{{\normalfont\textsf{Sch}}}
\newcommand{\bb}{\mathfrak{b}}
\newcommand{\im}{{\normalfont\text{im}}\,}
\newcommand{\pushright}[1]{\ifmeasuring@#1\else\omit\hfill$\displaystyle#1$\fi\ignorespaces}
\newcommand{\pushleft}[1]{\ifmeasuring@#1\else\omit$\displaystyle#1$\hfill\fi\ignorespaces}
\title{{The \'etale cohomology ring of a punctured arithmetic curve}}
\author{Eric Ahlqvist}
\address{Department of mathematics, Stockholms universitet, 
106 91 Stockholm, Sweden}
\email{eric.ahlqvist@math.su.se}
\author{Magnus Carlson}
\address{Institut für Mathematik, 
Johann Wolfgang Goethe-Universität, 
Robert-Mayer-Str. 6-8, 
D-60325 Frankfurt am Main, 
Germany}
\email{carlson@math.uni-frankfurt.de}
\begin{document}

\selectlanguage{english}

\begin{abstract}
We compute the cohomology ring $H^*(U,\mathbb{Z}/n\mathbb{Z})$ for $U=X\setminus S$ where $X$ is the spectrum of the ring of integers of a number field $K$ and $S$ is a finite set of finite primes. As a consequence, we obtain an efficient way to compute presentations of $Q_2(G_S)$, where $G_S$ is Galois group of the maximal extension of $K$ unramified outside of a finite set of primes $S$, for varying $K$. This includes the following cases (for $p$ any prime dividing $n$): $\mu_p(\overline{K}) \not\subseteq K$; $S$ does not contain the primes above $p$; and $p=2$ with $K$ admitting real archimedean places. We also show how to recover the classical reciprocity law of the Legendre symbol from the graded commutativity of the cup product.
\end{abstract}

\thanks{The first author was supported by the Swedish Research Council 2015-05554 and the Knut and Alice Wallenberg Foundation 2021.0279. The second author was sponsored by the Knut and Alice Wallenberg Foundation 2017.0400}

\maketitle

\setcounter{tocdepth}{1}
\tableofcontents

\section{Introduction}
Let $K$ be a number field with ring of integers $\OO_K$ and let $X=\Spec \OO_K$. In this paper we compute the \'etale cohomology ring $H^*(U,\ZZ/n\ZZ)$ where $U\subseteq X$ is an open subscheme. This generalizes the results in \cite{Ahlqvist--Carlson-cup} not only by going from $X$ to an open subscheme $U$, but also in the sense that it covers the case when $K$ has a real place, $n$ is even, and $U=X$. Our results also generalizes those of McCallum--Sharifi: in \cite{McCallum--Sharifi} they compute the cup product
$H^1(U,\mu_n) \otimes H^1(U,\mu_n)  \to H^2(U,\mu_n^{\otimes 2})$ when $K$ contains the $n$th roots of unity and $U = X \setminus S$ for $S$ a finite set of places containing the places above $n$ (note that since $S$ contains the places above $n$ and $K$ contains the $n$th roots of unity, $\mu_n \cong \ZZ/n\ZZ$). The computations we make in this article gives the cup product for an arbitrary finite set of non-archimedean places $S$, and we do not assume that $K$ contains the $n$th roots of unity.

The ring $H^*(U,\ZZ/n\ZZ)$ holds non-trivial arithmetic information about $K$. One illustration of this is the following example (see Section \ref{sec:examples}) first observed by Morishita: Let $U=\Spec\ZZ\setminus\{p,q\}$ where $p$ and $q$ are primes which are both $1$ (mod $4$).
Taking rings of integers of the quadratic extensions $\QQ(\sqrt{p})$ and $\QQ(\sqrt{q})$, we get two elements $x_p,x_q\in H^1(U,\ZZ/2\ZZ)$ and the element $x_p\cupp x_q$ is completely determined by the Legendre symbol $\legendre{p}{q}$ and vice versa. In particular, $x_p\cupp x_q=0$ if and only if $\legendre{p}{q}=1$.

We will now state the main results of this paper. Let $I_K$ be the group of non-complex idèles of $K$ and let $C_S(K)$ denote the $S$-idèle class group. We begin with stating the computation of the étale cohomology groups.

\begin{customthm}{\ref{thm:coh-groups}}
Let $K_+$ be the set of totally positive elements in $K$ and $\Cl^+(K)$ the narrow class group of $K$. Furthermore, let $r$ be the number of real places of $K$ and for any abelian group $A$ we denote by $A^\sim=\Hom(A,\QQ/\ZZ)$, the Pontryagin dual of $A$. We then have
  \[
    H^i(X,\ZZ/n\ZZ)\cong
    \begin{cases}
      \ZZ/n\ZZ & i=0\,, \\
      (\Cl^+(K)/n\Cl^+(K))^\sim & i=1\,, \\
      (Z^1/B^1)^\sim & i=2\,,\\
      (\mu_n(K_+) \oplus (\RR^\times/n\RR^\times)^r)^\sim & i=3\,,\\
      ((\RR^\times/n\RR^\times)^r)^\sim & i\geq 4 \ .
    \end{cases}
  \]
where
  \[
      \begin{split}
        Z^1 & = \{(a,I)\in K_+^\times\oplus \Div U : \divis(a)I^n=1\}\,, \\
        B^1 & = \{(b^{-n},\divis(b)):b\in K^\times\}\,.
      \end{split}
  \]
For a finite, non-empty set of finite places $S$ and $U = X \setminus S$, we have that
  \[
  H^i(U,\ZZ/n\ZZ)=\begin{cases}
    \ZZ/n\ZZ & i=0\,, \\
    (C_S(K)/nC_S(K))^\sim & i=1\,, \\
    (C_S(K)[n])^\sim & i=2\,,\\
    ((\RR^\times/n\RR^\times)^r)^\sim & i\geq 3\,.
  \end{cases}
  \]
\end{customthm}

The following theorem describes the cup product structure in $H^*(U,\ZZ/n\ZZ)$ when $S$ contains at least one finite place. This is in some sense the only ``interesting'' cup product in this situation.

\begin{customthm}{\ref{thm:cup}}
Suppose that $S$ is a finite non-empty set of finite places and let $U = X \setminus S$. Let $y$ and $z$ be elements in $H^1(U,\ZZ/n\ZZ)$. Suppose that $y$ is represented by a cyclic extension $L/K$, unramified outside of $S$, together with a choice of generator $\sigma \in \Gal(L/K)$, and assume that $L/K$ has degree $d|n$. Then under the identifications $H^1(U,\ZZ/n\ZZ)\cong (C_S(K)/nC_S(K))^\sim$ and $H^2(U,\ZZ/n\ZZ)\cong (C_S(K)[n])^\sim$ we have that $y\cupp z\in (C_S(K)[n])^\sim$ satisfies the formula
  \[
    \langle y\cupp z,\alpha\rangle=\langle z,\alpha^{n^2/2d}N_{L|K}(\beta)^{n/d}\rangle
  \]
where $\langle-,-\rangle$ is the evaluation map, $\beta$ is an element in $I_L$ such that $\alpha^{n/d}=t\beta/\sigma(\beta)$, and $t\in L^\times$ satisfies $N_{L|K}(t)=\alpha^{-n}$.
\end{customthm}

For the case when $S= \emptyset$ and $K$ is totally imaginary, we have already computed the cohomology ring in \cite{Ahlqvist--Carlson-cup}, as previously noted.  The following theorem is essential for determining the cohomology ring structure of $X= \Spec \OO_K$ when $K$ is not necessarily totally imaginary; in some sense, as in the situation when $S \neq \emptyset$, when $K$ has a real place, there is only one interesting class of cup products, and they are given by:

\begin{customthm}{\ref{thm:cup2}}
  Let $y$ and $z$ be elements in $H^1(X,\ZZ/n\ZZ)$. Suppose that $y$ is represented by a cyclic extension $L/K$, unramified outside of $S$, together with a choice of generator $\sigma \in \Gal(L/K)$, and assume that $L/K$ has degree $d|n$. Then under the identifications $H^1(X,\ZZ/n\ZZ)\cong (\Cl^+(K)/n\Cl^+(K))^\sim$ and $H^2(X,\ZZ/n\ZZ)\cong (Z^1/B^1)^\sim$ we have that $y\cupp z\in (Z^1/B^1)^\sim$ satisfies the formula
    \[
      \langle y\cupp z,(b,\bb)\rangle=\langle z,\bb^{n^2/2d}N_{L|K}(I)^{n/d}\rangle
    \]
  where $I$ is an element in $\Div(L)$ such that $\bb^{n/d}=\divis(t)I/\sigma(I)$, where $t\in L^\times$ satisfies $N_{L|K}(t)=b^{-1}$.
\end{customthm}

The formulas we have found for the cup product in étale cohomology has already been applied to problems in number theory a number of times. We mention a few:
\begin{enumerate}
  \item The inverse Galois problem \cite{CarlsonSchlankUnramified};
  \item Verifying the unramifed Fontaine--Mazur conjecture in some special cases \cite{MaireUnramified};
  \item Arithmetic Chern Simons theory \cite{Ahlqvist--Carlson-cup};
  \item Mod 2 arithmetic Dijkgraaf-Witten invariants for certain real quadratic number fields \cite{Hirano}.
\end{enumerate}
The results of this paper gives a more complete picture of the \'etale cohomology of arithmetic curves and we hope that there are a lot more applications in number theory to be found in a not too distant future.

\subsection*{Notation and conventions}
Throughout the paper we use the following notation:
\begin{itemize}
  \item $K$ is a number field,
  \item $X=\Spec \OO_K$, where $\OO_K$ is the ring of integers of $K$,
  \item $\Omega$ is the set of places of $K$,
  \item $\Omega_\infty$ is the set of archimedean places of $K$,
  \item $\Omega_\CC$ is the set of complex places of $K$,
  \item $\Omega_\RR$ is the set of real places of $K$,
  \item $S\subseteq \Omega$ is a finite non-empty set of places containing $\Omega_\RR$,
  \item $S_f=S\setminus \Omega_\RR$,
  \item $U=X\setminus S_f$,
  \item $j\colon \Spec K\to X$ is the canonical inclusion,
  \item $U_{\fl}$ is the big fppf site of $U$
  \item $\mmu_n$ is the fppf-sheaf of $n$th roots of unity.
   \end{itemize}
   
\subsection*{Acknowledgements}
The second author is grateful to Tomer Schlank for helpful discussions on some of the material in this paper.

\section{The cohomology of a punctured arithmetic curve}\label{sec:cohomology-groups}
The purpose of this section is to compute the \'etale cohomology groups $H^i(U,\ZZ/n\ZZ)$.  We compute these cohomology groups by using essentially the same techniques as in \cite{Ahlqvist--Carlson-cup}, where we computed the cohomology groups $H^i(X,\ZZ/n\ZZ)$  for  $X= \Spec \OO_K$ with $K$ totally imaginary and $\OO_K$ the ring of integers of $K.$ The strategy to compute the cohomology groups $H^i(U,\ZZ/n\ZZ)$ is to first compute the compactly supported \emph{flat} cohomology groups $H^i_c(\Ufl,\mmu_n)$ by resolving $\mmu_n$,  the fppf sheaf of $n$th roots of unity, and then to use Artin--Verdier duality, which gives that $H^i(U,\ZZ/n\ZZ)$ is the Pontryagin dual to $H^i_c(\Ufl,\mmu_n)$. We will now start by recalling the definition of compactly supported fppf cohomology, following \cite{MilneADT} and \cite{Demarche--Harari}. We then proceed by stating Artin--Verdier duality, and then we  compute the cohomology groups $H^i_c(\Ufl,\mmu_n)$, and thus, by Pontryagin duality, we also compute $H^i(U,\ZZ/n\ZZ)$.

Before we compute the cohomology groups we will establish some notation.  For an non-archimedean place $p$,  $K_p$ denotes the usual completion of $K$ and at an archimedean place, $K_p = \mathbb{R}$ if $p$ is real, and $K_p = \mathbb{C}$ if $p$ is complex. We define $\OO_p$ to be unit ball in $K_p$ if $p$ is non-archimedean, and we let $\OO_p = K_p$ if $p$ is archimedean. Let us now define $I_K$ as the group of (non-complex) idèles
  \[I_{K}=\prod_{p\in \Omega\setminus \Omega_\CC}'K^\times_p
  \,,\]
where $\prod'$ means that we take the restricted product with respect to the subgroup $\prod_{p \in \Omega \setminus \Omega_\CC} \OO_p^\times$.
Define
  \[\begin{split}
  & C_{K} = I_{K}/K^\times\,, \\
  & C_S(K)=I_K/K^\times U_{K,S}\,, \\
  \end{split}\]
where
  \[U_{K,S}=\prod_{p\in S}\{1\}\times \prod_{p\in U}\OO_p^\times
  \,\]
  and $K$ is embedded diagonally.

Let us also define $I_{K,S}=\prod_{p\in S}K_p^\times$, $\OO_{K,S}=\{x\in K:|x|_p\leq 1\mbox{ for all }p\notin S\}$, and $C_{K,S}=I_{K,S}/\OO_{K,S}^\times$. We have a  canonical inclusion $C_{K,S}\hookrightarrow C_S(K)$ induced by the map $I_{K,S}  \rightarrow I_K$ which takes $( \alpha_p)_{p \in S}$ to the idèle which is $\alpha_p$ at places in $S$ and $1$ outside of $S$. By \cite[Proposition 8.3.5]{Neukirch--Schmidt--Wingberg} we have an exact sequence of topological groups
  \[1\to C_{K,S}\to C_S(K)\to \Cl_S(K)\to 1\,,\] where $\Cl_S(K)$ is the $S$-ideal class group considered as a discrete group.

\subsection*{Flat cohomology with compact support}
Following \cite{Demarche--Harari} and \cite{Geisser--Schmidt}, we now proceed by defining flat cohomology with compact support.

Let $F$ be a bounded complex of abelian sheaves on $\Ufl$.
Define $Z'= \amalg_{p\in X\setminus U}\Spec K_p$ and write $\gamma\colon Z'\to U$ for the canonical morphism.
By \cite[\href{https://stacks.math.columbia.edu/tag/06VX}{Tag 06VX}]{stacks-project} the big fppf site has enough points. According to \cite[C.2.2.11]{JohnstoneElephant2} one can choose a \emph{set} of points such that the points are jointly conservative. We now
let $F\to G(F)$ be the Godement resolution with respect to a fixed choice of a set of points which are jointly conservative.  We thus resolve $F$ degreewise so that $G(F)$ is a double complex. Then we have the unit map
    \[\Gamma(U,G(F))\to \Gamma(U,\gamma_*\gamma^*G(F))=\Gamma(Z',\gamma^*G(F))\,.\]
For each $\nu\in \Omega_\RR$ we let $a^\nu\colon (\Sch/\Spec \RR)_{\fl}\to (\Spec\RR)_\et$ be the canonical morphism of sites. Then $a^\nu_*$ is exact and hence $a^\nu_*G(F)_\nu$ is a resolution of $a^\nu_*F_\nu$ into acyclics.
As explained in \cite[§2]{Geisser--Schmidt}, there is a resolution $D^\bullet(a^\nu_*F_\nu)$ of $a^\nu_*F_\nu$ which is pointwise acyclic, and splicing $D^\bullet(a^\nu_*F_\nu)$ and $a^\nu_*G(F)_\nu$ together, one gets a functorial complete (pointwise) acyclic resolution $\hat{G}(F_\nu)$ of $a^\nu_*F_\nu$. The resolution $\hat{G}(F_\nu)$ computes the Tate-hypercohomology of $a^\nu_*F_\nu$ and there is a canonical map $a^\nu_*G(F)_\nu\to \hat{G}(F_\nu)$.

We define
  \[
    R\hat{\Gamma}_c(\Ufl,F)=\Tot(C(\Gamma(U,G(F))\to \Gamma(Z',\gamma^*G(F))\oplus \bigoplus_{\nu\in \Omega_\RR}\Gamma(K_\nu,\hat{G}(F_\nu)))[-1])\,,
  \]
and 
\[
  R\Gamma_c(\Ufl,F)=\Tot(C(\Gamma(U,G(F))\to \Gamma(Z',\gamma^*G(F))\oplus \bigoplus_{\nu\in \Omega_\RR}\Gamma(K_\nu,a^\nu_*G(F)_\nu))[-1])\,,
\]
 where $C(-)$ denotes the mapping cone. Then we have a canonical natural transformation 
  \[
    R\Gamma_c(\Ufl,F) \to R\hat{\Gamma}_c(\Ufl,F)\,.  
  \] 
We will also at times view $R\Gamma_c(\Ufl,F)$ and $R\hat{\Gamma}_c(\Ufl,F)$ as objects of $D(\Ab)$. 
\begin{df}
  For a bounded complex $F$ of abelian sheaves on $\Ufl$, we define
      \[
          H^i_c(\Ufl,F)=H^i(R\hat{\Gamma}_c(\Ufl,F))\,.
      \]
\end{df}
The following two definitions will be useful in Section \ref{sec:cup}. Given a bounded complex $F$ of abelian sheaves on $\Ufl$, we now define, functorially in $F$, an object $\Gamma_c(\Ufl,F)$, which will be a complex of abelian groups.
\begin{df} \label{def:global}
For a bounded complex $F$ of abelian sheaves on $\Ufl$, we define $\Gamma_c(\Ufl,F)$ to be the complex given by
\[
  H^0(C(\Gamma(U,G(F))\to \Gamma(Z',\gamma^*G(F))\oplus \bigoplus_{p\in \Omega_\RR}\Gamma(K_\nu,a^\nu_*G(F)_\nu))[-1])
\]
where we take cohomology ``vertically''.
\end{df}

Hence we have canonical natural transformations 
  \[
    \Gamma_c(\Ufl,F)\to R\Gamma_c(\Ufl,F) \to R\hat{\Gamma}_c(\Ufl,F)\,. 
  \]

\subsection*{Cohomology of $U$}
We will now resolve $\mmu_n$. We have $\mmu_n\cong\HOM(\ZZ/n\ZZ,\GG_m)$, where we are taking the internal hom in the category of fppf sheaves. We define resolutions $\Ei\to \ZZ/n\ZZ$ and $\GG_{m,U}\to \C$ by
  \begin{equation}
    \Ei  = (\ZZ\xrightarrow{n}\ZZ)  \label{res1}
    \end{equation}
    and
    \begin{equation}
    \C  =( j_*\GmK\xrightarrow{\divis}\DIV_U )\label{res2}
  \end{equation}
where $\Ei$ is concentrated in degree $-1$, $0$, $\C$ is concentrated in degree 0, 1, and
  \[
    \DIV_U=\bigoplus_{p\in U}(i_p)_*\ZZ
  \]
is the sheaf of Weil divisors, where $i_p\colon \Spec \kappa(p) \rightarrow U$ is the inclusion of the residue field (for a proof that $\C$ is a resolution of $\GG_m$, see \cite[II,3.9]{MilneEtale}).
The complex $\HOM(\Ei,\C)$ is a resolution of $\mmu_n$ for the following reasons. First, since the map $\cdot n \colon \GG_m \rightarrow \GG_m$ is an epimorphism of sheaves, $\HOM(\Ei,\mathbb{G}_m)$ is a resolution of $\mmu_n$. Then, since $\Ei$ is a complex of locally free sheaves, $\HOM(\Ei,-) = R\HOM(\Ei,-)$, so $\HOM(\Ei,-)$ preserves the quasi-isomorphism $\GG_m \rightarrow \C$, which gives that $\HOM(\Ei,\C)$ is a resolution of $\mmu_n$.

There is a horizontal filtration on $R\hat{\Gamma}_c(\Ufl,\HOM(\Ei,\C))$ coming from the columns of the corresponding double complex
\[
  L=C(\Gamma(U,G(\HOM(\Ei,\C)))\to \Gamma(Z',\gamma^*G(\HOM(\Ei,\C)))\oplus \bigoplus_{p\in \Omega_\RR}\Gamma(K_\nu,\hat{G}(\HOM(\Ei,\C)_\nu)))[-1]\,,
\]
so one obtains a spectral sequence
    \[
      E_2^{s,t}=H^sH^t(\Gamma(L)) \Rightarrow H^{s+t}(R\hat{\Gamma}_c(\Ufl,\HOM(\Ei,\C)))\cong H^{s+t}_c(\Ufl,\mmu_n)
    \]
with $E_1^{s,t}=H^t(L^{s,\bullet})$. We will now use this spectral sequence to calculate $H^i_c(\Ufl,\mmu_n)$, and proceed by calculating the $E_2$-page. The following well-known lemma is crucial for computing the $E_2$-page.

\begin{lem}
For any $\nu\in \Omega_\RR$, we have
  \[
    H_T^i(\Gal(\overline{K}_\nu/K_\nu),\GG_{m,K_\nu})\cong
    \begin{cases}
        K^\times_\nu/2K^\times_\nu & \mbox{if }i\mbox{ is even}\,, \\
        0 & \mbox{if }i\mbox{ is odd}\,.
    \end{cases}
  \]
\end{lem}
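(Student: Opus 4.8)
The plan is to reduce the statement immediately to the elementary structure theory of the Tate cohomology of a finite cyclic group. Since $\nu$ is a real place we have $K_\nu = \RR$ and $\overline{K}_\nu = \CC$, so $G := \Gal(\overline{K}_\nu/K_\nu) = \Gal(\CC/\RR) \cong \ZZ/2\ZZ$ is cyclic of order two, generated by complex conjugation $\sigma$. The relevant module is $M := \CC^\times$ with its natural $\sigma$-action. Because $G$ is cyclic, its Tate cohomology is $2$-periodic, so it suffices to compute one even and one odd group, using the standard description in terms of the norm element $N = 1 + \sigma$:
\[
  H_T^{\,\mathrm{even}}(G,M) \cong M^G/NM, \qquad H_T^{\,\mathrm{odd}}(G,M) \cong \ker(N)/(\sigma-1)M.
\]

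For the even case I would compute the two groups explicitly. The invariants are $M^G = (\CC^\times)^\sigma = \RR^\times = K_\nu^\times$, while the norm map sends $z \mapsto z\,\sigma(z) = z\bar z = |z|^2$, whose image is precisely the group $\RR_{>0}$ of positive reals. Since $\RR_{>0} = (\RR^\times)^2 = 2K_\nu^\times$, we obtain $M^G/NM = \RR^\times/\RR_{>0} = K_\nu^\times/2K_\nu^\times$, as claimed.

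For the odd case the relevant kernel is $\ker(N) = \{z \in \CC^\times : z\bar z = 1\}$, i.e. the unit circle. The image of $\sigma - 1$ consists of the elements $\sigma(z)/z = \bar z/z$; writing $z = re^{i\theta}$ this equals $e^{-2i\theta}$, so $(\sigma-1)M$ is the entire unit circle and hence equals $\ker(N)$. Therefore the odd Tate groups vanish, and together with $2$-periodicity this yields the stated answer in all degrees.

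The computation involves no genuine obstacle, as everything reduces to identifying the images of the norm map and of $\sigma - 1$ on $\CC^\times$. The only point requiring a moment's care is matching the additive notation $2K_\nu^\times$ with the group of squares $(K_\nu^\times)^2$ and observing that this subgroup coincides exactly with the image $\RR_{>0}$ of the norm, so that the identification is an equality and not merely an abstract isomorphism.
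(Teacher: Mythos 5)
Your proof is correct and follows exactly the route the paper indicates: invoke $2$-periodicity of Tate cohomology for the cyclic group $\Gal(\CC/\RR)$ and then compute $H_T^0$ and $H_T^1$ directly (the paper leaves this last computation to the reader, and your explicit identification of $NM=\RR_{>0}=(\RR^\times)^2$ and $(\sigma-1)\CC^\times=\ker N$ fills it in correctly).
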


\begin{proof}
  Since $H_T^i(\Gal(\CC/\RR),\GG_{m,\RR})$ is periodic in $i$ with period 2 \cite[Theorem I.6.1]{Neukirch-Bonn}, it is enough to compute for instance $H_T^0(\Gal(\CC/\RR),\GG_{m,\RR})$ and $H_T^1(\Gal(\CC/\RR),\GG_{m,\RR})$. This is left to the reader.
\end{proof}

For $0\leq t\leq 2$ we have that $E_1^{s,t}=H^t(L^{s,\bullet})$ is
  \[
      \begin{matrix}
        \Br(K) & \xrightarrow{d^{0,2}} & \Br(K)\oplus \prod_{p\nmid\infty} \QQ/\ZZ \oplus \prod_{\nu\in \Omega_\RR}K^\times_\nu/2K^\times_\nu & \xrightarrow{d^{1,2}} & \prod_{p\nmid\infty} \QQ/\ZZ \oplus \prod_{\nu\in \Omega_\RR}K^\times_\nu/2K^\times_\nu \\
        0 & \to & 0 & \to & 0 \\
        K^\times & \xrightarrow{d^{0,0}} & K^\times\oplus \Div U\oplus\prod_{p\in S_f} K_p^\times \oplus \prod_{\nu\in \Omega_\RR}K^\times_\nu/2K^\times_\nu & \xrightarrow{d^{1,0}} & \Div U\oplus \prod_{p\in S_f} K_p^\times \oplus \prod_{\nu\in \Omega_\RR}K^\times_\nu/2K^\times_\nu
      \end{matrix}
  \]
where $d^{0,2}$ is injective since it is the invariant map into the second factor. Hence the $E_2$-page may be pictured as
  \[
    \begin{sseq}{0...4}{-2...2}
    \ssdropbull
    \ssmoveto{2}{-2}
    \ssdropbull
    \ssmoveto{1}{-2}
    \ssdropbull
    \ssmoveto{1}{0}
    \ssdropbull
    \ssmoveto{2}{0}
    \ssdropbull
    \ssmoveto{1}{2}
    \ssdropbull
    \ssmoveto{2}{2}
    \ssdropbull
    \end{sseq}
  \]
in vertical degrees $-2\leq t\leq 2$. We see that $E_2^{1,-2}$, $E_2^{1,2}$, and $E_2^{2,2}$ will not contribute to $H^{i}_c(\Ufl,\mmu_n)$ for $1 \leq i\leq 2$. Hence we conclude that for $1 \leq i\leq 2$, $H^{i}_c(\Ufl,\mmu_n)$ is the $i$th cohomology of the complex
  \[
    K^\times \xrightarrow{d^{0,0}} K^\times\oplus \Div U\oplus\prod_{p\in S_f} K_p^\times \oplus \prod_{\nu\in \Omega_\RR}K^\times_\nu/2K^\times_\nu \xrightarrow{d^{1,0}} \Div U\oplus \prod_{p\in S_f} K_p^\times \oplus \prod_{\nu\in \Omega_\RR}K^\times_\nu/2K^\times_\nu
  \]
where
  \[
      d^{0,0} =
      \begin{pmatrix}
        -n \\
        \divis \\
        \eta \\
        (\iota_\nu)_\nu
      \end{pmatrix}\mbox{ and }
      d^{1,0} =
      \begin{pmatrix}
      \divis & n & 0 & 0 \\
      \eta & 0 & n & 0 \\
      (\iota_\nu)_\nu & 0 & 0 & n
      \end{pmatrix}\,.
  \]

\begin{lem}\label{lem:dual-groups}
Let $K_+$ be the set of totally positive elements in $K$ and $\Cl^+(K)$ the narrow class group of $K$. Furthermore, let $r$ be the number of real places of $K$. For $S=\Omega_\RR$ we have
  \[
    H^i_c(X_\fl,\mmu_n)=
    \begin{cases}
      (\RR^\times/n\RR^\times)^r & i<0\, \\
      \mu_n(K_+) \oplus (\RR^\times/n\RR^\times)^r & i=0\,, \\
      Z^1/B^1 & i=1\,, \\
      \Cl^+(K)/n\Cl^+(K) & i=2\,,\\
      \ZZ/n\ZZ & i=3\,,\\
      0 & i\geq 4\,,
    \end{cases}
  \]
where
  \[
      \begin{split}
        Z^1 & = \{(a,I)\in K_+^\times\oplus \Div U : \divis(a)I^n=1\}\,, \\
        B^1 & = \{(b^{-n},\divis(b)):b\in K^\times_+\}\,.
      \end{split}
  \]
For $S\neq \Omega_\RR$ we have
  \[
  H^i_c(\Ufl,\mmu_n)=\begin{cases}
    (\RR^\times/n\RR^\times)^r & i \leq 0\, \\
    C_S(K)[n] & i=1\,, \\
    C_S(K)/nC_S(K) & i=2\,,\\
    \ZZ/n\ZZ & i=3\,,\\
    0 & i\geq 4\,.
  \end{cases}
  \]
\end{lem}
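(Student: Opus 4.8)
The plan is to read everything off the spectral sequence $E_2^{s,t}\Rightarrow H^{s+t}_c(\Ufl,\mmu_n)$ set up above, treating the two cases $S=\Omega_\RR$ and $S\neq\Omega_\RR$ in parallel and splitting the degrees into three ranges. The degrees $1\le i\le 2$ are the heart of the matter: as already recorded, here $H^i_c(\Ufl,\mmu_n)$ is the $i$th cohomology of the explicit three-term complex with differentials $d^{0,0}$ and $d^{1,0}$, so the task reduces to computing a kernel-modulo-image and a cokernel of two explicit maps between $K^\times$, $\Div U$, $\prod_{p\in S_f}K_p^\times$ and $\prod_{\nu\in\Omega_\RR}K_\nu^\times/2K_\nu^\times$.

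First I would compute $H^2_c=\coker d^{1,0}$. The image of $d^{1,0}$ is generated by the principal-divisor-with-signs classes $(\divis(c),\bar c)$, the subgroup $n\Div U$, and (when $S_f\neq\varnothing$) the local components at $S_f$. The key input is the presentation of the narrow class group as the cokernel of the sign-and-divisor map $K^\times\to\Div X\oplus\prod_{\nu\in\Omega_\RR}K_\nu^\times/2K_\nu^\times$ (respectively, of the idèle class group via the exact sequence $1\to C_{K,S}\to C_S(K)\to\Cl_S(K)\to 1$ recalled above). Dividing further by the image of the ``$n$''-columns then yields $\Cl^+(K)/n\Cl^+(K)$ when $U=X$ and $C_S(K)/nC_S(K)$ when $U\subsetneq X$; the point is that multiplication by $n$ kills the mod-squares real factors exactly when $n$ is even, which is what forces the \emph{narrow} class group and the totally positivity conditions to appear rather than the ordinary class group.

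Next I would compute $H^1_c=\ker d^{1,0}/\im d^{0,0}$. I would read off $\ker d^{1,0}$ as the pairs $(a,I)$ with $\divis(a)I^n=1$ subject to the local conditions imposed by the $S_f$- and real-components, and identify $\im d^{0,0}$ with $B^1$ (respectively the kernel calculation should produce the $n$-torsion $C_S(K)[n]$). Tracking how the sign data at the real places enters is the delicate step here, since it is responsible for the restriction to totally positive $a$ in $Z^1$ and $B^1$; once it is done the quotient is $Z^1/B^1$ for $U=X$ and $C_S(K)[n]$ for $U\neq X$. The degree $i=0$ computation is the analogous kernel calculation for $d^{0,0}$: its kernel is $\mu_n(K_+)$ when $S_f=\varnothing$ and trivial once $S_f\neq\varnothing$ (a global root of unity cannot be $1$ at a finite place without being $1$), which explains the appearance or disappearance of the $\mu_n(K_+)$ summand.

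Finally I would treat the remaining degrees using the shape of the $E_2$-page and the $2$-periodicity of local Tate cohomology at the real places. The $\ZZ/n\ZZ$ in degree $3$ comes from the row $t=2$ built from $\Br(K)$ and the local invariant/idèle terms: using that $d^{0,2}$ is injective together with the fundamental exact sequence of global class field theory, the surviving cohomology in total degree $3$ is the $n$-torsion $\ZZ/n\ZZ$. The contributions in degrees $i\le 0$ (respectively $i<0$) equal to $(\RR^\times/n\RR^\times)^r$ come entirely from the negative rows, which by the lemma on $H^i_T(\Gal(\overline K_\nu/K_\nu),\GG_{m,K_\nu})$ are $2$-periodic and supported at the real places, and the vanishing for $i\ge 4$ follows once one checks that the periodic contributions above the row $t=2$ are cancelled by the differentials of the spectral sequence. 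The hard part throughout will be the bookkeeping at the real places: getting the signs, the narrow class group, and the mod-squares factors to line up so that the totally positivity conditions in $Z^1$ and $B^1$ and the exact degrees carrying the $(\RR^\times/n\RR^\times)^r$ contributions come out precisely as stated.
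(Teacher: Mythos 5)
Your treatment of the range $1\le i\le 2$ follows the paper's own route: both reduce to the three-term complex with differentials $d^{0,0}$, $d^{1,0}$ and then match kernel/image against the narrow class group (via surjectivity of $K^\times\to\prod_{\nu\in\Omega_\RR}K_\nu^\times/2K_\nu^\times$, i.e.\ the approximation theorem) in the case $S=\Omega_\RR$, and against $C_S(K)$ in the case $S\neq\Omega_\RR$; the paper's extra ingredient there, which you should make explicit, is that the comparison map $C_S(K)\to(\Div U\oplus\prod_{p\in S}K_p^\times\oplus\prod_\nu K_\nu^\times/2K_\nu^\times)/K^\times$ has uniquely divisible kernel, so it induces isomorphisms on $n$-torsion and on mod-$n$ quotients. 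That part of your plan is sound in outline.

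The genuine gap is in degrees $i=0$, $i=3$ and $i\ge 4$, where you propose to stay inside the spectral sequence. For $i\ge4$ you assert that the periodic contributions above the row $t=2$ are ``cancelled by the differentials of the spectral sequence,'' but this cannot be the mechanism: the nonzero rows sit in even $t$, so every $d_r$ with $r\ge2$ lands in an odd row and vanishes, and the sequence degenerates at $E_2$. What you actually need is exactness of the rows $t\ge 2$ under the $d_1$-differential in the relevant columns, and that is a nontrivial input --- it encodes the fundamental exact sequence $0\to\Br(K)\to\bigoplus_v\Br(K_v)\to\QQ/\ZZ\to0$ (which must produce exactly a $\ZZ/n\ZZ$ in total degree $3$ and nothing in total degree $4$) together with the fact that $H^i(K,\GG_m)\to\bigoplus_{\nu\in\Omega_\RR}\hat{H}^i(K_\nu,\GG_m)$ is an isomorphism for $i\ge 3$; you have not supplied either verification. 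The paper avoids this entirely: it gets $i=0$ and $i=3$ from the Kummer sequence together with the known groups $H^i_c(\Ufl,\GG_m)$ (totally positive units in degree $0$, vanishing in degree $2$, $\QQ/\ZZ$ in degree $3$), and kills $i\ge4$ by flat Artin--Verdier duality against the vanishing of $H^{3-i}(U,\ZZ/n\ZZ)$ in negative degrees. Finally, in degree $0$ for $S=\Omega_\RR$ your filtration has two graded pieces, $E_\infty^{0,0}=\mu_n(K_+)$ and the real Tate contribution $(\RR^\times/n\RR^\times)^r$, so a priori there is an extension problem; it is harmless only because one of the two pieces always vanishes ($\mu_n(K_+)=1$ as soon as $K$ has a real place), and this should be said rather than left implicit.
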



Note in the above that $K^+=K$ when $K$ is totally imaginary. By flat Artin--Verdier duality (see \cite[Thm 1.1]{Demarche--Harari})  we get:

\begin{thm}\label{thm:coh-groups}
Let $K_+$ be the set of totally positive elements in $K$ and $\Cl^+(K)$ the narrow class group of $K$. Furthermore, let $r$ be the number of real places of $K$ and for any abelian group $A$ we denote by $A^\sim=\Hom(A,\QQ/\ZZ)$, the Pontryagin dual of $A$. For $S=\Omega_\RR$ we have
  \[
    H^i(X,\ZZ/n\ZZ)\cong
    \begin{cases}
      \ZZ/n\ZZ & i=0\,, \\
      (\Cl^+(K)/n\Cl^+(K))^\sim & i=1\,, \\
      (Z^1/B^1)^\sim & i=2\,,\\
      (\mu_n(K_+) \oplus (\RR^\times/n\RR^\times)^r)^\sim & i=3\,,\\
      ((\RR^\times/n\RR^\times)^r)^\sim & i\geq 4 \ .
    \end{cases}
  \]
where
  \[
      \begin{split}
        Z^1 & = \{(a,I)\in K_+^\times\oplus \Div U : \divis(a)I^n=1\}\,, \\
        B^1 & = \{(b^{-n},\divis(b)):b\in K^\times_+\}\,.
      \end{split}
  \]
For $S\neq \Omega_\RR$ we have
  \[
  H^i(U,\ZZ/n\ZZ)=\begin{cases}
    \ZZ/n\ZZ & i=0\,, \\
    (C_S(K)/nC_S(K))^\sim & i=1\,, \\
    (C_S(K)[n])^\sim & i=2\,,\\
    ((\RR^\times/n\RR^\times)^r)^\sim & i\geq 3\,.
  \end{cases}
  \]
\end{thm}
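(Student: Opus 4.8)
The plan is to obtain the theorem as a formal consequence of Lemma~\ref{lem:dual-groups} together with flat Artin–Verdier duality, so that no further computation is needed beyond bookkeeping. The input I would invoke is the duality theorem of Demarche–Harari \cite[Thm 1.1]{Demarche--Harari}: for a sheaf $\mathcal{F}$ on $\Ufl$ represented by a finite flat group scheme it furnishes a perfect pairing of locally compact abelian groups
\[
  H^{3-i}_c(\Ufl,\mathcal{F}) \times H^i(U,\mathcal{F}^D) \to \QQ/\ZZ\,,
\]
where $\mathcal{F}^D = \RHOM(\mathcal{F},\GG_m)$ denotes the Cartier dual. I would apply this with $\mathcal{F}=\mmu_n$, first recording that $\mathcal{F}^D \simeq \ZZ/n\ZZ$ concentrated in degree $0$, by Cartier duality (equivalently, biduality of the locally free resolution $\Ei$ of $\ZZ/n\ZZ$ used above). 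The pairing then specializes to
\[
  H^{3-i}_c(\Ufl,\mmu_n) \times H^i(U,\ZZ/n\ZZ) \to \QQ/\ZZ\,,
\]
and since $\ZZ/n\ZZ$ is a finite étale group scheme its fppf and étale cohomology coincide, so the right-hand factor is the étale cohomology group of the theorem. This yields a natural isomorphism $H^i(U,\ZZ/n\ZZ) \cong \big(H^{3-i}_c(\Ufl,\mmu_n)\big)^\sim$.

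With this isomorphism in hand the proof reduces to substituting the values of $H^{3-i}_c(\Ufl,\mmu_n)$ from Lemma~\ref{lem:dual-groups} and dualizing. For $S\neq\Omega_\RR$ one reads off $H^0 \cong (H^3_c)^\sim = (\ZZ/n\ZZ)^\sim = \ZZ/n\ZZ$, then $H^1 \cong (H^2_c)^\sim = (C_S(K)/nC_S(K))^\sim$, then $H^2 \cong (H^1_c)^\sim = (C_S(K)[n])^\sim$, and $H^i \cong (H^{3-i}_c)^\sim = \big((\RR^\times/n\RR^\times)^r\big)^\sim$ for $i\geq 3$, using that $H^j_c = (\RR^\times/n\RR^\times)^r$ for $j\leq 0$. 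The case $S=\Omega_\RR$, that is $U=X$, is identical with the first list of Lemma~\ref{lem:dual-groups}: there the groups $\mu_n(K_+)\oplus(\RR^\times/n\RR^\times)^r$ and $Z^1/B^1$ sit in degrees $0$ and $1$ of $H^\bullet_c$, hence dualize into degrees $3$ and $2$ of $H^\bullet(X,\ZZ/n\ZZ)$. As a sanity check I would note that $H^0(U,\ZZ/n\ZZ)=\ZZ/n\ZZ$ independently, $\ZZ/n\ZZ$ being the constant sheaf on the connected scheme $U$, consistently with the dual of $H^3_c=\ZZ/n\ZZ$.

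The step demanding genuine care, as opposed to routine reindexing, is the correct treatment of the real places and the verification that the hypotheses of the duality hold verbatim for the groups defined earlier. The compactly supported cohomology entering the pairing must be the Tate-modified version $H^j_c=H^j(R\hat{\Gamma}_c)$ from the preceding definition, in which the real places contribute through Tate cohomology; this is exactly the formulation in which \cite{Demarche--Harari} prove duality, and it is what forces $\big((\RR^\times/n\RR^\times)^r\big)^\sim$ to persist in every degree $i\geq 3$. I would also confirm that $\mmu_n$ is finite flat over $U$, so that the pairing is perfect on finite (hence reflexive) abelian groups and Pontryagin duality is involutive throughout; since $\RR^\times/n\RR^\times$ is finite — trivial for odd $n$ and $\ZZ/2\ZZ$ for even $n$ — no topological subtleties arise beyond those already handled in \cite{Demarche--Harari}.
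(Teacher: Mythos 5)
Your proposal is correct and follows exactly the paper's own route: the theorem is obtained by combining Lemma \ref{lem:dual-groups} with the flat Artin--Verdier duality of Demarche--Harari, identifying $H^i(U,\ZZ/n\ZZ)$ with $\bigl(H^{3-i}_c(\Ufl,\mmu_n)\bigr)^\sim$ and reading off the groups degree by degree. Your added remarks on the Tate-modified compactly supported cohomology at the real places and the finiteness of all groups involved are exactly the points the paper leaves implicit.
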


\begin{proof}[Proof of Lemma \ref{lem:dual-groups}]
We saw that, for $1 \leq i\leq 2$, $H^i_c(\Ufl,\mmu_n)$ is the $i$th cohomology of the complex
\[
  K^\times \xrightarrow{d^{0}} K^\times\oplus \Div U\oplus\prod_{p\in S_f} K_p^\times \oplus \prod_{\nu\in \Omega_\RR}K^\times_\nu/2K^\times_\nu \xrightarrow{d^{1}} \Div U\oplus \prod_{p\in S_f} K_p^\times \oplus \prod_{\nu\in \Omega_\RR}K^\times_\nu/2K^\times_\nu
\]
where
\[
    d^{0} =
    \begin{pmatrix}
      -n \\
      \divis \\
      \eta \\
      (\iota_\nu)_\nu
    \end{pmatrix}\mbox{ and }
    d^{1} =
    \begin{pmatrix}
    \divis & n & 0 & 0 \\
    \eta & 0 & n & 0 \\
    (\iota_\nu)_\nu & 0 & 0 & n
    \end{pmatrix}\,.
\]
We first consider the case $S=\Omega_\RR$.  The cases $i=1$ and $i=2$ follows by the approximation theorem \cite[§3, (3.4)]{Neukirch-ANT}, which implies that the map $K^\times \to \prod_{\nu\in \Omega_\RR}K^\times_\nu/2K^\times_\nu$ is surjective. In the negative degrees, we just get the Tate cohomology of $\mmu_n$ at the real places.

Next we consider the case $S\neq \Omega_\RR$.
The map $\eta$ is injective and hence $H^0_c(\Ufl,\mmu_n)=0$.

Note that if $(b,\bb, \alpha,\beta)\in \ker d^1$, then $\eta(b)=\alpha^{-n}$ and since $\eta$ is injective, we get that $b$ is determined by $\alpha$ and in particular, if $\alpha=1$ then $b=1$. This means that the projection
  \[K^\times\oplus \Div U\oplus\prod_{p\in S_f} K_p^\times \oplus \prod_{\nu\in \Omega_\RR}K^\times_\nu/2K^\times_\nu\to \Div U\oplus\prod_{p\in S_f} K_p^\times \oplus \prod_{\nu\in \Omega_\RR}K^\times_\nu/2K^\times_\nu\]
gives an isomorphism
  \[
    \ker d^1/\im d^0\cong ((\Div U\oplus \prod_{p\in S_f}K_p^\times\oplus \prod_{\nu\in \Omega_\RR}K^\times_\nu/2K^\times_\nu)/K^\times)[n]\,.
  \]
We have a canonical surjection
  \[
    C_S(K) \to (\Div U\oplus \prod_{p\in S_f}K_p^\times\oplus \prod_{\nu\in \Omega_\RR}K^\times_\nu/2K^\times_\nu)/K^\times\,.
  \]
  with kernel $(1,1, \prod_{\nu\in \Omega_\RR}(K^\times_\nu)^2)K^\times/K^\times$. The kernel is thus isomorphic to $\prod_{\nu\in \Omega_\RR}(K^\times_\nu)^2$ since $K^\times \rightarrow K_p^\times$ is injective for a non-archimedean prime.
  But $\prod_{\nu\in \Omega_\RR}(K^\times_\nu)^2$ is uniquely divisible, so upon applying the $n$-torsion functor, we get an isomorphism
  \[
    C_S(K)[n]  \cong ((\Div U\oplus \prod_{p\in S_f}K_p^\times\oplus \prod_{\nu\in \Omega_\RR}K^\times_\nu/2K^\times_\nu)/K^\times)[n]\,.
  \]
  Thus our claim for $H^1_c(\Ufl,\mmu_n)$ follows, as does the claim for $H^2_c(\Ufl,\mmu_n)$. To see that $H^0_c(\Ufl,\mmu_n) = \mu_n(K_+) \oplus (\RR^\times/n\RR^\times)^r$ note first that if $K$ has a real place, then the right hand side is isomorphic to $(\RR^\times/n\RR^\times)^r$ since $\mu_n(K_+)$ vanishes. If on the other hand $K$ is totally imaginary, the right hand side is isomorphic to $\mu_n(K)$. 
  By using the long exact sequence
  \[
    \dots \to H^i_c(\Ufl,\GG_m)\to H^i(\Ufl,\GG_m)\to \bigoplus_{p\in S}H^i(K_p,\GG_m )\to \dots
  \]
  together with the fact that $H^{-1}(K_p,\GG_m) =0$ for $p \in S$ (where we interpret $H^{-1}(K_p,\GG_m)$ as Tate cohomology in the archimedean situation), we get that $H^0_c(\Ufl,\GG_m) = \OO_{U,+}^\times$, i.e., the totally positive units of $U$. By considering the long exact sequence in compactly supported cohomology associated to the Kummer sequence
  \[
    0\to \mmu_n\to \GG_m\to \GG_m\to 0
  \]
  we then get that $H^0_c(\Ufl,\mmu_n) $ sits in a short exact sequence
  \[
    0 \to  (\RR^\times/n\RR^\times)^r \to H^1_c(\Ufl,\GG_m) \to \mu_n(K_+) \to 0\,.
  \]
  By our previous remarks on when the leftmost term and the rightmost term are zero, our claim follows. To find the value of $H^i_c(\Ufl,\mmu_n)$ for $i = 3$, one proceeds with the Kummer sequence once again and uses that $H^2_c(\Ufl,\mmu_n)=0 $ (see \cite[II,Proposition 2.6]{MilneADT}) is divisible and that $H^3_c(\Ufl,\GG_m) = \QQ/\ZZ$. To lastly see that $H^i_c(\Ufl,\mmu_n)=0$ for $i>3$ one can use flat Artin--Verdier duality to see that they then coincide with the Pontryagin dual of $H^{3-i}(\Ufl,\ZZ/n\ZZ),$ and these vanish since cohomology groups in negative degrees always are zero.
%
\end{proof}

\begin{remark}
  When $K$ is totally imaginary, a slightly more geometric approach to computing the groups $H^{1}_c(\Ufl,\mmu_n)$ and $H^{2}_c(\Ufl,\mmu_n)$ without using resolutions is as follows:

  Let us first consider the case $S=\emptyset$. The group $H^1(X_\fl,\mmu_n)$ classifies $\mmu_n$-torsors up to isomorphism. Let $B\mmu_n(X)$ be the groupoid of $\mmu_n$-torsors. Any $\mmu_n$-torsor is isomorphic to one on the form $\Spec \mathcal{A}\to X$ where $\A$ is the $\OO_X$-algebra with module structure
    \[
      \A\cong \bigoplus_{i=0}^{n-1} \Li^{\otimes -i}
    \]
  for $\Li$ a line bundle on $X$ and with multiplication determined by an isomorphism $\Li^{\otimes-n}\to \OO_X$ with we think of as an invertible global section $u\in \Gamma(X,\Li^{\otimes n})^\times$. Hence we see that $B\mmu_n(X)$ is equivalent to the groupoid of pairs $(\Li,u)$ and with morphisms $(\Li',u')\to (\Li,u)$ given by isomorphisms $\varphi\colon \Li'\to \Li$ such that $\varphi^{\otimes n}u'=u$.
  A pair $(\Li,u)$ is \emph{trivial} if there is an isomorphism $\varphi\colon\Li\cong \OO_X$ such that $\varphi^{\otimes n}u=1$, i.e., $u$ is an $n$th power.
  Writing a line bundle $\Li\cong \OO_X(I)$ for some fractional ideal $I$ we may interpret $u\in \Gamma(X,\Li^{\otimes n })^\times\subset K^\times$ as a generator of the ideal $I^n$. This gives
    \[
      \pi_0B\mmu_n(X) \cong Z_1/B_1\cong H^1(X_\fl, \mmu_n)
    \]
  with
    \[\begin{split}
      Z_1 & = \{(\bb,b)\in \Div(X)\oplus K^\times:\divis(b)\bb^n=1\}\,, \\
      B_1 & = \{(\divis(b),b^{-n}):b\in K^\times\}\,.
    \end{split}\]

  Now to the case $S\neq \emptyset$: Write $i_p\colon \Spec \OO_p\to U$ for the canonical map where $\OO_p$ is the complete local ring at the prime $p$. The group $H^{1}_c(\Ufl,\mmu_n)$ classifies objects in the groupoid $B_c\mmu_n(U)$ up to isomorphism. That is, the groupoid with objects that are triples $(\Li, u, (\alpha_p)_{p\in S})$ where
  \begin{enumerate}
    \item $\Li$ is a line bundle on $U$,
    \item $u\in \Gamma(U,\Li^{\otimes n})^\times$, and
    \item $\alpha_p\colon i_p^*\Li\to \OO_p$ is an isomorphism
  \end{enumerate}
  such that $\alpha_p^ni_p^*u=1$. A morphism $(\Li', u', (\alpha'_p)_{p\in S})\to(\Li, u, (\alpha_p)_{p\in S})$ is given by an isomorphism $\varphi\colon \Li'\to \Li$ such that $\varphi^{\otimes n}u'=u$ and $\alpha_p i_p^*\varphi=\alpha'_p$.
  An object $(\Li, u, (\alpha_p)_{p\in S})$ is \emph{trivial} if there is an isomorphism $\varphi\colon \OO_X\to \Li$ such that $u\varphi^n=1$ and $(i_p^*u)\alpha_p^n=1$. As before, by interpreting $\Li$ via fractional ideals we may think of $\alpha_p$ as a choice of generator in $K_p^\times$ of the invertible module $i_p^*\Li$. We conclude that
    \[
      \pi_0B_c\mmu_n(U) \cong Z_{S}^1/B_{S}^1\cong H^1_c(\Ufl,\mmu_n)
    \]
  where
  \[\begin{split}
    Z_{S}^1 & = \{(\bb,b,a)\in \Div(X)\oplus K^\times\oplus \prod_{p\in S}K_p^\times:\divis(b)\bb^n=1\mbox{ and }\eta(b)a^n=1\}\,, \\
    B_{S}^1 & = \{(\divis(b),b^{-n},\eta(b)):b\in K^\times\}\,,
  \end{split}\]
  as in the proof of Lemma \ref{lem:dual-groups}.

  To compute $H^2_c(\Ufl,\mmu_n)$ we use the Kummer sequence
    \[
      0\to \mmu_n\to \GG_m\to \GG_m\to 0
    \]
  which is exact since we are working in the fppf topology. Hence we obtain an exact sequence
    \[
      H^1_c(\Ufl,\GG_m)\xrightarrow{\delta} H^2_c(\Ufl,\mmu_n)\to H^2_c(\Ufl,\GG_m)=0
    \]
  where the last equality follows from the exact sequence
    \[
      0=\bigoplus_{p\in S} H^1(\OO_p,\GG_m)\to H^2_c(\Ufl,\GG_m)\to H^2(X_\fl,\GG_m)=0
    \]
  of \cite[Proposition III.0.4.(c), Remark III.0.6]{MilneADT}.

  Similarly as before, $H^2_c(\Ufl,\mmu_n)$ classifies $\mmu_n$-gerbes with certain trivializations over the complete local rings at the primes in $S$ up to equivalence. We translate $\delta$ to the language of groupoids as follows (see e.g. \cite[12.2.5]{Olsson-Book}): With notation as before, $H^1_c(\Ufl,\GG_m)$ corresponds to the groupoid $B_c\GG_m(U)$ of pairs $(\Li, (\alpha_p)_{p\in S})$ where $\Li$ is a line bundle on $U$ and the $\alpha_p$'s are trivializations as before over the complete local rings of the primes in $S$.
  The map $\delta$ sends such a pair $(\Li, (\alpha_p)_{p\in S})$ to the pair $(U(\sqrt[n]{\Li}),\delta(\alpha_p)_{p\in S})$, where $U(\sqrt[n]{\Li})$ is the $\mmu_n$-gerbe classifying $n$th roots of the line bundle $\Li$ and
    \[
      \delta(\alpha_p)\colon \Spec\OO_p\times_U U(\sqrt[n]{\Li})\simeq \Spec\OO_p(\sqrt[n]{\OO_p})=B\mmu_n
    \]
  is the trivialization induced by $\alpha_p\colon i_p^*\Li\cong \OO_p$. We have an essentially surjective map
    \[
      \Div(U)\oplus \prod_{p\in S}K_p^\times \to \delta(B_c\GG_m)
    \]
  sending a pair $(I,(a_p)_{p\in S})$ to $(U(\sqrt[n]{\OO_U(I)}),(\tilde{a}_p)_{p\in S})$ there $\tilde{a}_p$ is the trivialization obtained by the choice of generator $a_p$ for $I$ over $\OO_p$. The pair $(U(\sqrt[n]{\OO_U(I)}),(\tilde{a}_p)_{p\in S})$ will be \emph{trivial} in the appropriate sense exactly when $I$ is in the image of the map
    \[
    \begin{pmatrix}
      \divis & n & 0 \\
      \eta & 0 & n
    \end{pmatrix}\colon K^\times\oplus \Div U\oplus \prod_{p\in S}K_p^\times\to
    \Div U\oplus \prod_{p\in S}K_p^\times\,.
    \]
\end{remark}

\section{The cup product} \label{sec:cup}
  In this section we compute the cohomology ring of an arithmetic curve. We start by giving some heuristics for how we will compute the cup product map. Assume first that $S_f\neq \emptyset$ and that $K$ has no real places. By analogy we may think of $U$ as the complement of a knot embedded in a 3-manifold. In particular, (here we use the assumption that $K$ has no real places), the étale cohomological dimension is 2. The cup product for $H^*(U,\ZZ/n\ZZ)$ is graded commutative and by considering the cohomological dimension, the only non-trivial task is to compute the map
    \[c_y:=y\cupp-\colon H^1(U,\ZZ/n\ZZ)\to H^{2}(U,\ZZ/n\ZZ)\, \\\]
  for any $y\in H^1(U,\ZZ/n\ZZ)$. We will compute the map $c_y$ as follows: we have a canonical isomorphism
    \[H^1(U,\ZZ/n\ZZ)\cong \Ext^1_U(\ZZ/n\ZZ,\ZZ/n\ZZ)\]
  and the group $\Ext^1_U(\ZZ/n\ZZ,\ZZ/n\ZZ)$ classifies extensions
    \[
      0\to \ZZ/n\ZZ\to E\to \ZZ/n\ZZ\to 0\,,
    \]
  of abelian étale sheaves on $U$.
  By choosing an extension $E$ representing the class $y\in H^1(U,\ZZ/n\ZZ)$ we get that $y\cupp-=\delta_E$ where
    \[
      \delta_E\colon H^i(U,\ZZ/n\ZZ)\to H^{i+1}(U,\ZZ/n\ZZ)
    \]
  is the connecting homomorphism coming from the extension $E$. We will then compute the map which is dual to $\delta_E$ under Artin--Verdier duality by taking resolutions of the sheaves occurring in the dual exact sequence, and then, dualize once again, to get a formula for $\delta_E$.

\subsection*{S-torsors}
There is a simple description of $\ZZ/n\ZZ$-torsors $Y\to U$. Recall that if $d|n$ and $Z\to U$ is a $\ZZ/d\ZZ$-torsor, then we may form the induced $\ZZ/n\ZZ$-torsor $\Ind_{\ZZ/d\ZZ}^{\ZZ/n\ZZ}(Z)$ as in \cite[p.~11]{Ahlqvist--Carlson-cup}.
Every $\ZZ/n\ZZ$-torsor is then of the form $Y=\Spec R$, and if $Y$ is connected, then $L=\Frac(R)$ is a degree $n$ extension of $K$ which is unramified outside of $S_f$, and the canonical morphism $Y\to \Spec \OO_L\times_XU$ is an isomorphism. If $Y$ is not connected, then $Y$ is induced from a connected degree $d$ torsor $Z \to U$ where $d | n$. We summarize the above discussion in the following lemma, and the proof of the lemma is entirely analogous to the proof of \cite[Lemma 2.20]{Ahlqvist--Carlson-cup}.

\begin{lem}
  Let $Y\to U$ be a $\ZZ/n\ZZ$-torsor. Then there exists a degree $d|n$ extension $L/K$ which is unramified outside of $S_f$ and an isomorphism
    \[Y\cong \Ind_{\ZZ/d\ZZ}^{\ZZ/n\ZZ}(\Spec\OO_L \times_X U)\,.\] Further, $L/K$ is unique up to isomorphism.
\end{lem}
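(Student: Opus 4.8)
The plan is to reduce the statement to the classification of connected cyclic covers of $U$ together with the standard formalism of induced torsors. Since $U$ is a nonempty open subscheme of the integral scheme $X=\Spec\OO_K$, it is itself integral, hence connected; fix a geometric point $\bar u$. A $\ZZ/n\ZZ$-torsor over the connected scheme $U$ is classified by a homomorphism $\rho\colon\pi_1(U,\bar u)\to\ZZ/n\ZZ$ (there is no conjugation ambiguity because $\ZZ/n\ZZ$ is abelian), and the fiber $Y_{\bar u}$ is a free transitive $\ZZ/n\ZZ$-set on which $\pi_1(U,\bar u)$ acts through $\rho$ by translation. Hence the set $\pi_0(Y)$ of connected components is identified with $\ZZ/n\ZZ/\im(\rho)$, on which $\ZZ/n\ZZ$ acts transitively with stabilizer $H:=\im(\rho)$. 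As every subgroup of $\ZZ/n\ZZ$ is the unique cyclic subgroup $\ZZ/d\ZZ$ of order $d$ for some $d\mid n$, this already pins down $d$.

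Next I would exhibit $Y$ as an induced torsor. Choosing a connected component $Y_0$, the computation above shows that its stabilizer in $\ZZ/n\ZZ$ is $H\cong\ZZ/d\ZZ$, so $Y_0\to U$ is a connected $\ZZ/d\ZZ$-torsor and the full group $\ZZ/n\ZZ$ permutes the $n/d$ components transitively. This is precisely the data defining the induced torsor, so one obtains a $\ZZ/n\ZZ$-equivariant isomorphism $Y\cong\Ind_{\ZZ/d\ZZ}^{\ZZ/n\ZZ}(Y_0)$, exactly as in \cite[Lemma 2.20]{Ahlqvist--Carlson-cup}.

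It then remains to identify the connected $\ZZ/d\ZZ$-torsor $Y_0$ with $\Spec\OO_L\times_X U$. Writing $Y_0=\Spec R$, finite étaleness of $Y_0\to U$ over the Dedekind (in particular regular) ring $\OO_U=\OO_{K,S_f}$ makes $R$ a regular, hence normal, domain finite over $\OO_U$; its fraction field $L=\Frac(R)$ is thus a degree-$d$ extension of $K$, and it is $\ZZ/d\ZZ$-Galois because $Y_0\to U$ is a connected $\ZZ/d\ZZ$-torsor. Étaleness of $R/\OO_U$ says exactly that $L/K$ is unramified at every finite prime outside $S_f$. Being normal and finite over $\OO_U$ with fraction field $L$, the ring $R$ equals the integral closure of $\OO_U$ in $L$, namely $\OO_L\otimes_{\OO_K}\OO_U$, which yields $Y_0\cong\Spec\OO_L\times_X U$. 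For uniqueness, note that $d=|\im(\rho)|$ and the isomorphism class of the component $Y_0$ are intrinsic to $Y$, so $L\cong\Frac(\OO(Y_0))$ is determined up to $K$-isomorphism.

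The step demanding the most care is the passage from the bookkeeping of connected components to a genuine $\ZZ/n\ZZ$-equivariant isomorphism $Y\cong\Ind_{\ZZ/d\ZZ}^{\ZZ/n\ZZ}(Y_0)$: one must check that the identifications of the remaining components with $Y_0$ can be chosen compatibly with the group actions. The only other point needing attention is verifying that $R$ is the full $S_f$-integral closure $\OO_L\otimes_{\OO_K}\OO_U$ and not merely an order inside it, which follows from the normality established above. Both points are routine but are where the actual content lies.
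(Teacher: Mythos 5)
Your argument is correct and follows essentially the same route as the paper, which outlines exactly this reasoning in the discussion preceding the lemma (connected torsors give cyclic extensions $L/K$ unramified outside $S_f$ with $Y\cong \Spec\OO_L\times_X U$, and disconnected ones are induced from a connected component) and then defers to the analogous \cite[Lemma 2.20]{Ahlqvist--Carlson-cup}. The two points you flag as needing care — the equivariance of the identification with the induced torsor and the normality of $R$ forcing it to be the full $S_f$-integral closure — are indeed where the content lies, and you resolve both correctly.
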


\subsection*{The extension associated to a torsor}
  The method is the same as in \cite{Ahlqvist--Carlson-cup}. For $y\in H^1(U,\ZZ/n\ZZ)$, choose a $\ZZ/n\ZZ$-torsor $\pi\colon Y\to U$ representing $y$. Then $Y$ is the restriction to $U$ of a ramified $\ZZ/n\ZZ$-torsor $Y'\to X$, where $Y'$ is of the form
    \[Y'=\Ind_{\ZZ/d\ZZ}^{\ZZ/n\ZZ}(Z)\]
  where $Z$ is the spectrum of the ring of integers of a degree $d|n$ extension Galois $L/K$ which is unramified outside $S_f$.
  Since $\pi$ is finite \'etale we get that $\pi_*=\pi_!$ and hence $\pi_*$ is left adjoint to $\pi^*$.
  The counit $N\colon \pi_*\pi^*\ZZ/n\ZZ\to \ZZ/n\ZZ$ is called the \emph{norm}. In algebraic geometry literature $N$ is usually called the \emph{trace}, but we use the name norm since it is standard number theoretic nomenclature.

\begin{remark}
  To clarify, we have two adjunctions
    \[\pi^*:\Sh(U_\et)\leftrightarrows \Sh(Y_\et):\pi_*\quad\mbox{ and }\quad \pi_*:\Sh(Y_\et)\leftrightarrows \Sh(U_\et):\pi^*\]
  where the functor on the left is \emph{left} adjoint to the functor on the right and vice versa.
\end{remark}

  We have a short exact sequence of abelian étale sheaves
    \begin{equation}\label{eq:tr-seq}0\to \ker N\to \pi_*\pi^*\ZZ/n\ZZ\xrightarrow{N} \ZZ/n\ZZ\to 0\,\end{equation}
  which we refer to as the \emph{norm sequence}.
  There is an equivalence of abelian categories between the category of locally constant sheaves split by $\pi$ and the category of $C_n$-modules, where $C_n$ is the cyclic group with $n$ elements and a fixed generator $e$. Under this equivalence the norm sequence corresponds to the short exact sequence of $C_n$-modules
      \begin{equation}\label{eq:tr-seq-mod}0\to \ker \epsilon\to \ZZ/n\ZZ[C_n]\xrightarrow{\epsilon} \ZZ/n\ZZ\to 0\,,\end{equation}
  where $C_n$ acts trivially on $\ZZ/n\ZZ$ and acts on $\ZZ/n\ZZ[C_n]$ by translation on the generators. The morphism $\epsilon\colon \ZZ/n\ZZ[C_n]\to \ZZ/n\ZZ$ sends $g\in C_n$ to 1. The $C_n$-module $\ker \epsilon$ is free as an $\ZZ/n\ZZ$-module on $\{g-1\}_{g\in C_n}$. We define a map
  $s\colon \ker \epsilon\to \ZZ/n\ZZ$ by sending $e^l-1$ to $l$.
  If we take the pushout of the sequence (\ref{eq:tr-seq-mod}) along $s$, we get a diagrams
    \[\begin{tikzcd}
      0\ar{r} & \ker \epsilon\ar{r}\ar{d}{s} &  \ZZ/n\ZZ[C_n]\ar{r}\ar{d} & \ZZ/n\ZZ\ar{r}\ar[equal]{d} & 0 \\
      0\ar{r} & \ZZ/n\ZZ\ar{r} &  P\ar{r} & \ZZ/n\ZZ\ar{r} & 0\,,
    \end{tikzcd}\]
  where $P=\ZZ[G]/(\ker \epsilon)^2$, and the short exact sequence of $C_n$-modules on the bottom line corresponds to a short exact sequence of locally constant $\pi$-split sheaves
    \begin{equation}\label{eq:trans-seq}0\to \ZZ/n\ZZ\to E\to \ZZ/n\ZZ\to 0\,\end{equation}
  which is called the \emph{transfer sequence} associated to the $\ZZ/n\ZZ$-torsor $Y\to U$.

\begin{lem}
  The connecting homomorphism $H^i(U,\ZZ/n\ZZ)\to H^{i+1}(U,\ZZ/n\ZZ)$ associated to the transfer sequence (\ref{eq:trans-seq}) is given by cup product with $y\in H^1(U,\ZZ/n\ZZ)$.
\end{lem}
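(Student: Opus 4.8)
The plan is to pass to the derived category $D(\Sh(U_\et,\ZZ/n\ZZ))$ of sheaves of $\ZZ/n\ZZ$-modules on the étale site of $U$, in which $\ZZ/n\ZZ$ is the unit for the derived tensor product $\otimes^{\LL}$. First I would recall the identification
\[
H^i(U,\ZZ/n\ZZ)\cong \Hom_{D}(\ZZ/n\ZZ,\ZZ/n\ZZ[i])\,,
\]
valid because $\ZZ/n\ZZ$ is the unit, so that $R\Hom_{\ZZ/n\ZZ}(\ZZ/n\ZZ,-)=R\Gamma(U,-)$; for $i=1$ this is the canonical isomorphism $H^1(U,\ZZ/n\ZZ)\cong \Ext^1_U(\ZZ/n\ZZ,\ZZ/n\ZZ)$ used above. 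The transfer sequence (\ref{eq:trans-seq}) is a short exact sequence of $\ZZ/n\ZZ$-modules, hence gives a distinguished triangle
\[
\ZZ/n\ZZ\to E\to \ZZ/n\ZZ\xrightarrow{\ \partial\ }\ZZ/n\ZZ[1]
\]
in $D$, whose connecting morphism $\partial\in \Hom_{D}(\ZZ/n\ZZ,\ZZ/n\ZZ[1])=H^1(U,\ZZ/n\ZZ)$ is by definition the class $[E]$ of the extension.

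There are then two things to check. The first is that the connecting homomorphism $\delta_E\colon H^i(U,\ZZ/n\ZZ)\to H^{i+1}(U,\ZZ/n\ZZ)$ of the long exact sequence attached to this triangle is postcomposition with $\partial$: for a morphism $\phi\colon \ZZ/n\ZZ\to \ZZ/n\ZZ[i]$ representing a class in $H^i(U,\ZZ/n\ZZ)$ one has $\delta_E(\phi)=\partial[i]\circ\phi$. This is immediate from the way the long exact sequence of a triangle is constructed. The second is that, under the identification $H^*(U,\ZZ/n\ZZ)\cong \Hom_{D}(\ZZ/n\ZZ,\ZZ/n\ZZ[*])$, this Yoneda composition product coincides with the cup product. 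This is the classical comparison between the composition and tensor products: because the multiplication $\ZZ/n\ZZ\otimes^{\LL}\ZZ/n\ZZ\to \ZZ/n\ZZ$ is an isomorphism, tensoring $\phi$ with the identity of $\ZZ/n\ZZ$ and composing with $\partial$ reproduces ordinary composition, whence $\partial[i]\circ\phi=\partial\cupp\phi$. Combining the two gives $\delta_E=\partial\cupp-$.

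It remains to identify $\partial=[E]$ with the chosen class $y$, i.e. to verify that the transfer sequence actually represents $y$. Here I would trace through the construction above: under the equivalence between locally constant $\pi$-split sheaves and $C_n$-modules, the norm sequence (\ref{eq:tr-seq}) corresponds to (\ref{eq:tr-seq-mod}), and its pushout along $s\colon \ker\epsilon\to \ZZ/n\ZZ$, $e^l-1\mapsto l$, realizes the generator of $\Ext^1_{C_n}(\ZZ/n\ZZ,\ZZ/n\ZZ)\cong \ZZ/n\ZZ$ determined by the chosen generator $e$. Transporting this back along the classifying morphism of the torsor $\pi\colon Y\to U$ yields exactly $y$; this step is entirely analogous to the corresponding computation in \cite{Ahlqvist--Carlson-cup}.

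The step I expect to be the main obstacle is the cup-versus-Yoneda comparison, where one must fix compatible models for both products — say a flat resolution of the unit for the cup product and the Yoneda splice for the composition product — and check that they agree on the nose, including signs, rather than merely up to an automorphism of $\ZZ/n\ZZ$. Once this naturality statement is secured, the remainder is formal triangle calculus together with the bookkeeping that identifies $[E]$ with $y$.
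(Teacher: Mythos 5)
Your argument is correct and is essentially the standard one the paper relies on: the paper's own proof is just a citation to \cite[Lemma 3.1]{Ahlqvist--Carlson-cup}, whose content is precisely the identification of the connecting map of an extension with Yoneda composition against its class in $\Ext^1_U(\ZZ/n\ZZ,\ZZ/n\ZZ)\cong H^1(U,\ZZ/n\ZZ)$, the comparison of Yoneda composition with the cup product, and the verification that the pushout of the norm sequence along $s$ represents $y$. Your remark that one should work in the derived category of $\ZZ/n\ZZ$-module sheaves (where $\ZZ/n\ZZ$ is the unit) is the right setting for the identification $H^1\cong\Ext^1$, and the sign/model issue you flag is the only real point of care.
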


\begin{proof}
  The proof of \cite[Lemma 3.1]{Ahlqvist--Carlson-cup} goes through with $\tilde{X}_{\et}$ replaced by $U_{\et}$.
\end{proof}

  Denote by $\delta_E$ the connecting homomorphism corresponding to (\ref{eq:trans-seq}) and $\delta_y$ the connecting homomorphism of (\ref{eq:tr-seq}). Then $\delta_E=f_*\circ \delta_y$ where $f_*\colon H^*(U,\ker N)\to H^*(U,\ZZ/n\ZZ)$ is the morphism on cohomology induced by $f$.

\subsection*{Duality and functoriality}

Let $A$ be a finite flat group scheme and denote by $D(A)=\HOM(A,\GG_m)$ its Cartier dual. As a consequence of \cite[Lemma 4.1]{Demarche--Harari}, we have a canonical pairing in the derived category of abelian groups
  \[
    R\hat{\Gamma}_c(\Ufl,A)\otimes^{\LL}R\Gamma(\Ufl,D(A))\to R\hat{\Gamma}_c(\Ufl,\GG_m)
  \]
which is functorial in $A$. But $R\hat{\Gamma}_c(\Ufl,\GG_m)$ is isomorphic via the trace map to $\QQ/\ZZ$ concentrated in degree 3. This gives an isomorphism $$R \hat{\Gamma}_c(\Ufl,A) \cong (R \Gamma(\Ufl,D(A)))^\sim[3]$$  (see \cite[Theorem 1.1]{Demarche--Harari}) in $D(\Ab).$
We then have an induced pairing in the category of abelian groups
  \[
    H_c^{3-i}(\Ufl,A)\times H^i(\Ufl,D(A))\to H^3_c(\Ufl,\GG_m)\cong \QQ/\ZZ
  \]
which again by \cite[Coroloary III.3.2]{MilneADT} and \cite[Theorem 1.1]{Demarche--Harari} is perfect and induces a duality between the group $H_c^{3-i}(\Ufl,A)$ and the group $H^i(\Ufl,D(A))$.

A morphism $\varphi\colon F\to G$ of finite flat group schemes induces a dual morphism $D(\varphi) \colon D(G)\to D(F)$ and hence we obtain maps
    \[\begin{split}
      \varphi_* & \colon R\Gamma(\Ufl,F) \to R\Gamma(\Ufl,G)\,, \\
      D(\varphi)_* & \colon R\hat{\Gamma}_c(\Ufl,D(G)) \to R\hat{\Gamma}_c(\Ufl,D(F))\,,
    \end{split}\]
and by functoriality of  the above pairing, the diagram
  \begin{equation}\label{eq:comm-dual}\begin{tikzcd}[column sep=40pt]
    R\Gamma(\Ufl,F)\ar{r}{\varphi_*}\ar{d}{\cong} & R\Gamma(\Ufl,G)\ar{d}{\cong} \\
    R\hat{\Gamma}_c(\Ufl,D(F))[3]^\sim\ar{r}{D(\varphi)_*[3]^\sim} & R\hat{\Gamma}_c(\Ufl,D(G))[3]^\sim
  \end{tikzcd}\end{equation}
commutes, where $\sim$ denotes the functor $\RHom(-,\QQ/\ZZ)$.
%
%

We will also need the following lemma which is a direct consequence of \cite[Lemma 4.3]{Demarche--Harari}:

\begin{lem}
  Let $0\to A\to B\to C\to 0$ be a short exact sequence of finite flat group schemes and let $0\to D(C)\to D(B)\to D(A)\to 0$ be the short exact sequence obtained by Cartier duality. Then, for all $i,j\geq 0$, we have connecting homomorphisms $\delta_i\colon H^{i}(U,C)\to H^{i+1}(U,A)$ and $\delta'_j\colon H_c^{j}(\Ufl,D(A))\to H_c^{j+1}(\Ufl,D(C))$, and the diagram
  \[
    \begin{tikzcd}H_c^{j}(\Ufl,D(A))\times H^{i+1}(U,A)\ar{r}{\langle-,-\rangle}\ar[shift right=15]{d}{\delta'_j} & H^{i+j+1}_c(\Ufl,\GG_{m,U})\ar[equal]{d} \\
    H_c^{i+1}(\Ufl,D(C))\times H^{i}(U,C)\ar{r}{\langle-,-\rangle}\ar[shift right=15]{u}{\delta_i} & H^{i+j+1}_c(\Ufl,\GG_{m,U})
    \end{tikzcd}
  \]
  commutes, in the sense that for all $a\in H^{i}(U,C)$ and $b\in H_c^{j}(\Ufl,D(A))$ we have
  \[
    \langle b,\delta_i(a)\rangle=\langle\delta'_j(b),a\rangle\,.
  \]
\end{lem}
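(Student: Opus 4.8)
The plan is to deduce the commutativity of the square directly from the way the two bilinear pairings are built, reducing the statement to the derived-category compatibility recorded in \cite[Lemma 4.3]{Demarche--Harari}. Both pairings in the diagram are obtained, by passing to cohomology, from the functorial derived-category pairing
\[
  R\hat{\Gamma}_c(\Ufl,D(A))\otimes^{\LL}R\Gamma(\Ufl,A)\to R\hat{\Gamma}_c(\Ufl,\GG_m)\cong \QQ/\ZZ[-3]\,,
\]
using $D(D(A))\cong A$, together with its analogue for the pair $(D(C),C)$. The whole question therefore takes place in $D(\Ab)$, and the functoriality of the pairing under Cartier-dual morphisms is exactly the content of diagram (\ref{eq:comm-dual}), which covers the horizontal maps of the two sequences; the only remaining point is the behaviour under the boundary maps.

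First I would identify the connecting homomorphisms with honest boundary maps of distinguished triangles. The short exact sequence $0\to A\to B\to C\to 0$ yields a triangle
\[
  R\Gamma(\Ufl,A)\to R\Gamma(\Ufl,B)\to R\Gamma(\Ufl,C)\xrightarrow{\partial} R\Gamma(\Ufl,A)[1]\,,
\]
so that $\delta_i=H^i(\partial)$; dually, $0\to D(C)\to D(B)\to D(A)\to 0$ yields
\[
  R\hat{\Gamma}_c(\Ufl,D(C))\to R\hat{\Gamma}_c(\Ufl,D(B))\to R\hat{\Gamma}_c(\Ufl,D(A))\xrightarrow{\partial'} R\hat{\Gamma}_c(\Ufl,D(C))[1]\,,
\]
so that $\delta'_j=H^j(\partial')$. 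The maps $A\to B\to C$ and $D(C)\to D(B)\to D(A)$ are Cartier-dual to one another, so (\ref{eq:comm-dual}) shows the pairing is compatible with every horizontal arrow of the two triangles.

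The heart of the argument is then the compatibility of the pairing with the two boundary maps $\partial$ and $\partial'$, which is precisely \cite[Lemma 4.3]{Demarche--Harari}: the duality pairing intertwines the connecting map of a short exact sequence of finite flat group schemes with the connecting map of its Cartier dual. Feeding the two triangles above into that lemma produces a commutative diagram in $D(\Ab)$; taking $H^j$ in the first variable and $H^{i}$ in the second, and using the identification $H^{i+j+1}_c(\Ufl,\GG_m)\cong\QQ/\ZZ$ in the target (so that both $\langle b,\delta_i(a)\rangle$ and $\langle\delta'_j(b),a\rangle$ land in degree $i+j+1$), yields exactly the asserted identity for $a\in H^i(U,C)$ and $b\in H^j_c(\Ufl,D(A))$.

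I expect the main obstacle to be bookkeeping rather than anything conceptual. One must fix a convention for the shift $[1]$ in each of the two triangles and track the resulting sign through the graded-commutativity of the pairing, verifying that the universal sign is absorbed into the evaluation map $\langle-,-\rangle$ so that the final formula carries no extra factor; and one must confirm that the indices $j$ and $i+1$ on one side and $j+1$ and $i$ on the other are the ones genuinely matched by \cite[Lemma 4.3]{Demarche--Harari} when its derived-category statement is specialised to this pair of dual triangles. Once these degree and sign checks are in place, the reduction to the cited lemma is the entire proof.
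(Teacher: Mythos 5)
Your proposal is correct and follows the same route as the paper, which states this lemma without further argument as a direct consequence of \cite[Lemma 4.3]{Demarche--Harari}; your reduction to the derived-category pairing and the boundary maps of the two dual distinguished triangles is precisely the content of that citation. The additional sign and degree bookkeeping you flag is reasonable diligence but does not change the substance.
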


As a consequence we get that
  \[
    \delta_{i}=\Hom(\delta'_{3-i},\QQ/\ZZ)
  \]
when identifying $H^i(U,\ZZ/n\ZZ)\cong \Hom(H_c^{3-i}(\Ufl,\mmu_n),\QQ/\ZZ)$\,.

\subsection*{Computing the cup product}
  For $y\in H^1(U,\ZZ/n\ZZ)$ we will compute the dual
    \[c_y^\sim\colon H^1_c(\Ufl,\mmu_n)\to H^2_c(\Ufl,\mmu_n)\]
  of $c_y\colon H^1(U,\ZZ/n\ZZ)\to H^2(U,\ZZ/n\ZZ)$. We saw that $c_y=f_*\circ \delta_y$ and from the argument above we get the following description of the dual $c_y^\sim$:

\begin{lem}
  The map $c_y^\sim\colon R\hat{\Gamma}_c(\Ufl,\mmu_n)\to R\hat{\Gamma}_c(\Ufl,\mmu_n)[1]$ is given by $c_y^\sim=\delta_y^\sim\circ D(f)_*$, where
   \[\delta_y^\sim=\RHom(\delta_y,\QQ/\ZZ[-3])\]
  and $\delta_y\colon \ZZ/n\ZZ\to \ker N[1]$ is the connecting homomorphism.
\end{lem}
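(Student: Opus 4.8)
The strategy is purely formal: I will dualize the factorization $c_y = f_*\circ\delta_y$ established just above, using that flat Artin--Verdier duality is a contravariant equivalence. Throughout, view $c_y$ as a morphism $R\Gamma(\Ufl,\ZZ/n\ZZ)\to R\Gamma(\Ufl,\ZZ/n\ZZ)[1]$ in $D(\Ab)$, with $\delta_y\colon R\Gamma(\Ufl,\ZZ/n\ZZ)\to R\Gamma(\Ufl,\ker N)[1]$ the connecting morphism of the norm sequence (\ref{eq:tr-seq}) and $f_*$ induced by $f\colon \ker N\to\ZZ/n\ZZ$. The relevant duality functor is $\RHom(-,\QQ/\ZZ[-3])$, which carries $R\Gamma(\Ufl,D(A))$ to $R\hat{\Gamma}_c(\Ufl,A)$; specializing to $A=\mmu_n$ (so $D(A)=\ZZ/n\ZZ$) it identifies $R\Gamma(\Ufl,\ZZ/n\ZZ)$ with $R\hat{\Gamma}_c(\Ufl,\mmu_n)$, and because the functor reverses shifts it turns the degree $+1$ self-map $c_y$ into a degree $+1$ self-map $c_y^\sim\colon R\hat{\Gamma}_c(\Ufl,\mmu_n)\to R\hat{\Gamma}_c(\Ufl,\mmu_n)[1]$.

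Since $\RHom(-,\QQ/\ZZ[-3])$ is contravariant, applying it to $c_y=f_*\circ\delta_y$ reverses the composition, giving $c_y^\sim = \delta_y^\sim\circ (f_*)^\sim$ where $\delta_y^\sim = \RHom(\delta_y,\QQ/\ZZ[-3])$ and $(f_*)^\sim=\RHom(f_*,\QQ/\ZZ[-3])$. It therefore remains to identify $(f_*)^\sim$ with $D(f)_*$. For this I note that $f$ is a morphism of finite (locally constant, hence flat) group schemes: under the equivalence between $\pi$-split locally constant sheaves and $C_n$-modules, $f$ corresponds to the $C_n$-equivariant map $s$ defining the pushout, so the functoriality diagram (\ref{eq:comm-dual}) applies verbatim with $\varphi=f$. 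That diagram says precisely that, under the Artin--Verdier identifications, $f_*$ is carried to $\RHom(D(f)_*,\QQ/\ZZ[-3])$; dualizing once more and using $\RHom(\RHom(-,\QQ/\ZZ[-3]),\QQ/\ZZ[-3])\cong\id$ yields $(f_*)^\sim = D(f)_*$, the map on compactly supported cohomology induced by the Cartier dual $D(f)\colon\mmu_n=D(\ZZ/n\ZZ)\to D(\ker N)$.

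Substituting gives $c_y^\sim = \delta_y^\sim\circ D(f)_*$, which is the claim: here $D(f)_*\colon R\hat{\Gamma}_c(\Ufl,\mmu_n)\to R\hat{\Gamma}_c(\Ufl,D(\ker N))$ and $\delta_y^\sim\colon R\hat{\Gamma}_c(\Ufl,D(\ker N))\to R\hat{\Gamma}_c(\Ufl,\mmu_n)[1]$, so the degrees match. For the subsequent explicit computation one will also want to recognize $\delta_y^\sim$ concretely, and this is supplied by the preceding lemma (the consequence of \cite[Lemma 4.3]{Demarche--Harari}): dualizing the connecting morphism $\delta_y$ of the norm sequence produces the connecting morphism on compactly supported cohomology of the Cartier-dual sequence $0\to\mmu_n\to D(\pi_*\pi^*\ZZ/n\ZZ)\to D(\ker N)\to 0$.

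The only genuine issue is bookkeeping. One must check that the Artin--Verdier isomorphisms are natural enough to be applied simultaneously to $R\Gamma(\Ufl,\ZZ/n\ZZ)$, to the intermediate term $R\Gamma(\Ufl,\ker N)[1]$, and to the shifted target, and that the shifts $[1]$, $[-3]$, $[3]$ compose correctly so that the dual of the connecting morphism again raises cohomological degree by one. Both points are exactly what diagram (\ref{eq:comm-dual}) and the pairing $R\hat{\Gamma}_c(\Ufl,A)\otimes^{\LL}R\Gamma(\Ufl,D(A))\to R\hat{\Gamma}_c(\Ufl,\GG_m)$ are designed to guarantee, so once naturality is invoked there is no further computation; the substantive work has already gone into assembling the functoriality statement (\ref{eq:comm-dual}).
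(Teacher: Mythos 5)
Your argument is correct and is essentially the paper's own: the paper states this lemma as an immediate consequence of the factorization $c_y=f_*\circ\delta_y$, the functoriality diagram (\ref{eq:comm-dual}) identifying the dual of $f_*$ with $D(f)_*$, and the Demarche--Harari compatibility of connecting homomorphisms with the duality pairing, which is exactly the formal dualization you carry out. Your degree/shift bookkeeping for $\delta_y^\sim$ also checks out.
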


  Consider the diagram
    \begin{equation}\label{eq:diag-for-cup}\begin{tikzcd}
      0\ar{r} & \ker N\ar{r}{u}\ar{d}{f} & \pi_*\pi^*\ZZ/n\ZZ\ar{r}{N} & \ZZ/n\ZZ\ar{r} & 0 \\
      & \ZZ/n\ZZ & & & \,.
    \end{tikzcd}\end{equation}
  In addition to the resolution $\Ei\to \ZZ/n\ZZ$, where $\Ei$ is as in equality (\ref{res1}), and $\pi_*\pi^*\Ei\to \pi_*\pi^*\ZZ/n\ZZ,$ we define a resolution $\K\to \ker N$ by
    \[\K = \quad \ZZ\xrightarrow{\begin{pmatrix}
      n \\ -\Delta
    \end{pmatrix}} \ZZ\oplus \pi_*\pi^*\ZZ\xrightarrow{\begin{pmatrix}
      \Delta & n
    \end{pmatrix}} \pi_*\pi^*\ZZ\]
  where $\Delta\colon \ZZ\to \pi_*\pi^*\ZZ$ is the unit for the adjunction
    \[\pi^*:\Sh(U_\et)\leftrightarrows \Sh(Y_\et):\pi_*\,.\]
  To see that this really gives a resolution one may once again consider the sheaves as $C_n$-modules.

  The diagram (\ref{eq:diag-for-cup}) is then quasi-isomorphic to a diagram
    \begin{equation}\label{eq:diag-for-cup-2}\begin{tikzcd}
      0\ar{r} & \K\ar{r}{\hat{u}}\ar{d}{\hat{f}} & \pi_*\pi^*\Ei\ar{r}{\hat{N}} & \Ei\ar{r} & 0 \\
      & \Ei & & & \,.
    \end{tikzcd}\end{equation}
  The short exact sequence of (\ref{eq:diag-for-cup-2}) gives a distinguished triangle
    \[\K\to Cyl(\hat{u})\to C(\hat{u})\xrightarrow{\pr} \K[1]\]
  where $Cyl$ is the mapping cylinder, and the connecting homomorphism $\delta_y$ is represented by $\pr\colon C(\hat{u})\to \K[1]$. Hence we have a map
    \[\hat{f}[1]\circ \pr\colon C(\hat{u})\to \K[1]\to \Ei[1]\,.\]
  The map $q(\hat{u}):=\begin{pmatrix} 0 & \hat{N} \end{pmatrix}\colon C(\hat{u})\to \Ei$ is a quasi-isomorphism (see e.g. \cite[III.3.5]{Gelfand--Manin}) and we get the zig-zag
    \[\begin{tikzcd}
      \Ei & C(\hat{u})\ar{l}[swap]{q(\hat{u})}\ar{r}{\pr} & \K[1]\ar{r}{\hat{f}[1]} & \Ei[1]\,.
    \end{tikzcd}\]
  Now we apply $\HOM(-,\C)$, where $\C$ is the complex as in equality (\ref{res2}), to obtain the zig-zag
    \begin{equation}\begin{tikzcd} \label{eq:berliner-zig-zag}
      \HOM(\Ei,\C)\ar{r}{q(\hat{u})^*} & \HOM(C(\hat{u}),\C) & \HOM(\K[1],\C)\ar{l}[swap]{\pr^*} & \HOM(\Ei[1],\C)\ar{l}[swap]{\hat{f}[1]^*}\,.
    \end{tikzcd}\end{equation}
  The map $q(\hat{u})^*$ is a quasi-isomorphism since $q(\hat{u})$ is a quasi-isomorphism of locally free sheaves.

\subsection*{Using flat cohomology with compact support}
Now by applying $\Gamma_c(\Ufl,-)$ (see Definition \ref{def:global}) to the above zig-zag we get (where we write $\Gamma_c$ for legibility)
    \[\begin{tikzcd} \label{eq:mainer-zig-zag}
      \Gamma_c(\HOM(\Ei,\C)) \ar{r}{\Gamma_c(q(\hat{u})^*)} & \Gamma_c(\HOM(C(\hat{u}),\C)) & \Gamma_c(\HOM(\K[1],\C)\ar{l}[swap]{\pr^*}) & \Gamma_c(\HOM(\Ei[1],\C)\ar{l}[swap]{\hat{f}[1]^*})\,.
    \end{tikzcd}\]
We will show that the above zig-zag represents the map  $c_y^\sim\colon H^1_c(\Ufl,\mmu_n)\to H^2_c(\Ufl,\mmu_n)$ in cohomology. Note that we have a commutative diagram
        \begin{equation}\adjustbox{scale=0.9,center}{\begin{tikzcd}[sep=large]  \label{eq:three-isos}
      \Gamma_c(\HOM(\Ei,\C)) \ar{r}{\Gamma_c(q(\hat{u})^*)} \ar{d}{s} & \Gamma_c(\HOM(C(\hat{u}),\C)) \arrow{d}{t}& \Gamma_c(\HOM(\K[1],\C)\ar{l}[swap]{\Gamma_c(\pr^*)})\arrow{d}& \Gamma_c(\HOM(\Ei[1],\C)\ar{l}[swap]{\Gamma_c(\hat{f}[1]^*)}) \arrow{d}
      \\
       R \Gamma_c(\HOM(\Ei,\C))\ar{r}{R\Gamma_c(q(\hat{u})^*)}\ar{d}{s'} & R\Gamma_c(\HOM(C(\hat{u}),\C))\ar{d}{t'} & R \Gamma_c(\HOM(\K[1],\C)\ar{l}[swap]{R\Gamma_c(\pr^*)})\ar{d} & R\Gamma_c(\HOM(\Ei[1],\C)\ar{l}[swap]{R\Gamma_c(\hat{f}[1]^*)})\ar{d} \\
         R\hat{\Gamma}_c(\HOM(\Ei,\C)) \ar{r}{R\hat{\Gamma}_c(q(\hat{u})^*)} & R\hat{\Gamma}_c(\HOM(C(\hat{u}),\C)) & R\hat{\Gamma}_c(\HOM(\K[1],\C)\ar{l}[swap]{R\hat{\Gamma}_c(\pr^*)}) & R\hat{\Gamma}_c(\HOM(\Ei[1],\C)\ar{l}[swap]{R\hat{\Gamma}_c(\hat{f}[1]^*)})  \,.
    \end{tikzcd}}\end{equation}
The bottom zig-zag, from right to left, represents the Pontryagin dual $c_y^\sim$ of cup product with $y$. The idea is to compute $c_y^\sim$, by first moving up via the rightmost zig-zag, then moving left via the uppermost zig-zag, and finally moving down via the leftmost zig-zag. 
  \begin{lem}\label{lem:three-isos}
    The maps $s$, $t$, $t'$, and $\Gamma_c(C(q(\hat{u})^*))$ of (\ref{eq:three-isos}) give isomorphisms on $H^2$. Furthermore, the leftmost and rightmost vertical zig-zags gives isomorphisms on cohomology in degree 2.
  \end{lem}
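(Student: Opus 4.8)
The plan is to reduce every assertion to the single computation of the left-hand column, namely the double complex computing $H^*_c(\Ufl,\mmu_n)$ that was already analysed above, and then to propagate the result through the commuting squares of (\ref{eq:three-isos}). Throughout I would use that $R\Gamma_c$ and $R\hat{\Gamma}_c$, being built from the Godement resolution, send quasi-isomorphisms of complexes of sheaves to quasi-isomorphisms, whereas the non-derived functor $\Gamma_c$ of Definition \ref{def:global} does not; this is exactly where the work lies.

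First I would settle the left and right columns. Since $\HOM(\Ei,\C)$ resolves $\mmu_n$, the complex $\Gamma_c(\HOM(\Ei,\C))$ is precisely the $t=0$ row $E_1^{\bullet,0}$ of the spectral sequence $E_1^{s,t}=H^t(L^{s,\bullet})\Rightarrow H^{s+t}_c(\Ufl,\mmu_n)$, and the vertical composite $s'\circ s$ is the corresponding edge map into the abutment $H^{*}(R\hat{\Gamma}_c(\HOM(\Ei,\C)))=H^*_c(\Ufl,\mmu_n)$. The computation of the $E_2$-page carried out above shows that in total degrees $1\le i\le 2$ only this row contributes, the entries $E_2^{1,-2}$, $E_2^{1,2}$ and $E_2^{2,2}$ being irrelevant; hence $s'\circ s$ is an isomorphism on $H^1$ and on $H^2$. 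This is the assertion that the leftmost vertical zig-zag is an isomorphism on $H^2$, and applying the statement in degree $1$ together with $\HOM(\Ei[1],\C)=\HOM(\Ei,\C)[-1]$ gives the rightmost vertical zig-zag on $H^2$ as well.

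Next I would separate $s$ from $s'$ by isolating the archimedean correction. The cone of the natural transformation $R\Gamma_c\to R\hat{\Gamma}_c$ is supported at the real places and, term by term, is the cone of $a^\nu_*G(F)_\nu\to \hat{G}(F_\nu)$, i.e.\ the difference between ordinary and Tate hypercohomology of $\Gal(\overline{K}_\nu/K_\nu)$. By the periodicity computation of $H^i_T(\Gal(\overline{K}_\nu/K_\nu),\GG_{m,K_\nu})$ this difference is concentrated in degrees $\le 0$, so after the shift in the definition of $R\Gamma_c$ it affects only cohomology in degrees $\le 1$; in particular $s'$ is an isomorphism on $H^2$. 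Combined with the previous paragraph this forces $s$ itself to be an isomorphism on $H^2$.

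It remains to treat the middle column, and this is the only genuinely delicate point. The map $q(\hat{u})^*$ is a quasi-isomorphism of bounded complexes of locally free sheaves, so $R\Gamma_c(q(\hat{u})^*)$ and $R\hat{\Gamma}_c(q(\hat{u})^*)$ are isomorphisms; but since $\Gamma_c$ is not exact I cannot conclude directly that $\Gamma_c(q(\hat{u})^*)$ is an isomorphism. The plan is to upgrade $q(\hat{u})^*$ to a chain homotopy equivalence of the $\HOM(-,\C)$ complexes, so that the additive functor $\Gamma_c$ preserves it and $\Gamma_c(C(q(\hat{u})^*))$ is acyclic; in particular $\Gamma_c(q(\hat{u})^*)$ is then an isomorphism on $H^2$. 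If producing the homotopy is awkward in the sheaf category, the fallback is to write out the two $t=0$ row complexes explicitly and check the claim on $H^2$ by hand, exactly as in the $d^0,d^1$ analysis in the proof of Lemma \ref{lem:dual-groups}. Granting this, the top-left square $t\circ \Gamma_c(q(\hat{u})^*)=R\Gamma_c(q(\hat{u})^*)\circ s$ forces $t$ to be an isomorphism on $H^2$, and the square relating $R\Gamma_c$ to $R\hat{\Gamma}_c$ through the isomorphisms $R\Gamma_c(q(\hat{u})^*)$ and $R\hat{\Gamma}_c(q(\hat{u})^*)$ together with $s'$ forces $t'$ to be an isomorphism on $H^2$. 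I expect this homotopy-equivalence step, equivalently the direct verification that the non-derived $\Gamma_c$ already computes $H^2$ correctly on the cone complex, to be the main obstacle, since every remaining assertion is then formal diagram-chasing once the left column and the archimedean vanishing are in hand.
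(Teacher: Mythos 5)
Your treatment of the outer columns is essentially the paper's: the column-filtration spectral sequence shows that in total degrees $1\le i\le 2$ only the $t=0$ row contributes, which handles the leftmost zig-zag, and the rightmost one follows from the degree shift; your isolation of the archimedean correction (ordinary versus Tate hypercohomology of $a^\nu_*\HOM(\Ei,\C)_\nu\simeq \mu_n(\CC)$ differs only in degrees $\le 0$, hence only in degrees $\le 1$ after the shift) correctly gives that $s'$ is an isomorphism on $H^2$ and separates $s$ from $s'$. The formal $2$-out-of-$3$ propagation around the squares is also fine once one node of the middle column is established.

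The gap is in the middle column, and your primary route does not work. The map $q(\hat u)\colon C(\hat u)\to\Ei$ is a quasi-isomorphism of bounded complexes of $\pi$-split locally constant sheaves, i.e.\ (under the equivalence with $C_n$-modules) of complexes built from $\ZZ$ with trivial action and $\ZZ[C_n]$. The trivial module $\ZZ$ is not projective over $\ZZ[C_n]$, so the acyclic cone $C(q(\hat u))$ need not be contractible, and in fact it is not in general: its extension data encodes the class $y$ itself, and if $q(\hat u)$ (hence $q(\hat u)^*$) were a chain homotopy equivalence, every additive functor --- in particular $\Gamma_c$ --- would automatically preserve the quasi-isomorphism and the entire cup-product computation would reduce to pure homological algebra with no arithmetic input, which is not the case. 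Your fallback (``check the $t=0$ rows by hand'') is also incomplete as stated: comparing the $t=0$ rows only identifies $H^2(\Gamma_c(\HOM(C(\hat u),\C)))$; to conclude that $t$ is an isomorphism on $H^2$ you must additionally show that the $t=2$ row of the $E_1$-page for $R\Gamma_c(\Ufl,\HOM(C(\hat u),\C))$ does not contribute in total degree $2$. This is the paper's key step: the $E_1^{0,2}$-term is $\Br(L)^{n/d}$ (Brauer groups of the extension field $L$, not of $K$), and $d_1^{0,2}$ is injective because the sum of local invariant maps $\Br(L)\to\bigoplus_v\Br(L_v)$ is injective --- the Hasse principle for the Brauer group. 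Without naming and using this input, the statement for $t$ (and hence, by your own $2$-out-of-$3$ scheme, for $\Gamma_c(q(\hat u)^*)$ and $t'$) is not proved.
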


  \begin{proof}
  See Appendix \ref{app:comp}.
  \end{proof}

  \begin{cor}\label{cor:dual-cup}
    The map $c_y^\sim\colon H^1_c(\Ufl,\mmu_n)\to H^2_c(\Ufl,\mmu_n)$ is given by \[H^1(\Gamma_c(q(\hat{u})^*))^{-1}\circ H^1(\Gamma_c(\pr^*))\circ H^1(\Gamma_c(\hat{f}[1]^*))\,.\]
  \end{cor}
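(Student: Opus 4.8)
The plan is to read the formula off the commutative diagram (\ref{eq:three-isos}), whose three rows present one and the same zig-zag of complexes at the non-derived ($\Gamma_c$), derived ($R\Gamma_c$), and Tate-modified ($R\hat{\Gamma}_c$) levels. The idea is to transport the already-established description of $c_y^\sim$ from the bottom row, where it lives by construction, up to the top row, where every term is an explicit complex of abelian groups, using the vertical comparison maps as the bridge.

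First I would recall the two inputs assembled just before Lemma \ref{lem:three-isos}. The bottom zig-zag, read from right to left --- invert the quasi-isomorphism $R\hat{\Gamma}_c(q(\hat{u})^*)$, then precompose with $R\hat{\Gamma}_c(\pr^*)$ and $R\hat{\Gamma}_c(\hat{f}[1]^*)$ --- represents $c_y^\sim$; here the inversion is legitimate because $q(\hat{u})$ is a quasi-isomorphism of complexes of locally free sheaves, so $R\hat{\Gamma}_c(q(\hat{u})^*)$ is invertible in $D(\Ab)$. Next I would note that every square of (\ref{eq:three-isos}) commutes, since its vertical arrows are the canonical natural transformations $\Gamma_c\to R\Gamma_c\to R\hat{\Gamma}_c$ fixed after Definition \ref{def:global} and commutativity is just their naturality in the complex argument.

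The crux is that at the top ($\Gamma_c$) level the map $\Gamma_c(q(\hat{u})^*)$ is not automatically invertible, since $\Gamma_c$ is not a derived functor and need not preserve quasi-isomorphisms; this is where Lemma \ref{lem:three-isos} enters. By that lemma the vertical comparison maps $s$, $t$, $t'$ and the two outer vertical zig-zags are isomorphisms in the relevant degree. Combined with the commutativity of the leftmost square and the invertibility of $R\hat{\Gamma}_c(q(\hat{u})^*)$, this forces $\Gamma_c(q(\hat{u})^*)$ to be an isomorphism there as well, so that $H^1(\Gamma_c(q(\hat{u})^*))^{-1}$ is meaningful. The outer vertical isomorphisms identify the cohomology of the two outer $\Gamma_c$-terms with the source $H^1_c(\Ufl,\mmu_n)$ and the target $H^2_c(\Ufl,\mmu_n)$ of $c_y^\sim$; and since every square commutes, the top zig-zag read from right to left induces on cohomology exactly the map induced by the bottom one. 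Collapsing the chase onto the top row gives
\[
  c_y^\sim = H^1(\Gamma_c(q(\hat{u})^*))^{-1}\circ H^1(\Gamma_c(\pr^*))\circ H^1(\Gamma_c(\hat{f}[1]^*))\,,
\]
as claimed.

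Granting Lemma \ref{lem:three-isos}, everything here is formal diagram-chasing, and the genuine difficulty lies entirely inside that lemma --- namely verifying that the three levels really do agree in the relevant degree, which is where the explicit structure of the resolutions $\Ei$, $\C$, $\K$ and of the Godement/Tate machinery must be used, and whose proof is deferred to Appendix \ref{app:comp}. So the main obstacle is not the corollary itself but the invertibility of $\Gamma_c(q(\hat{u})^*)$ on cohomology, which the corollary imports wholesale from the lemma.
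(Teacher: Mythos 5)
Your argument is exactly the one the paper intends: the corollary is stated as an immediate consequence of the commutativity of diagram (\ref{eq:three-isos}) together with Lemma \ref{lem:three-isos}, and your diagram chase (bottom row represents $c_y^\sim$, vertical comparison maps are isomorphisms in the relevant degree, hence the top row computes the same map with $\Gamma_c(q(\hat{u})^*)$ invertible on cohomology) is precisely that deduction. Correct, and essentially identical in approach to the paper.
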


  We are now in a position to compute the dual $c_y^\sim$ of the cup product 
    \[
      c_y=y\cupp- \colon H^1(U,\ZZ/n\ZZ)\to H^2(U,\ZZ/n\ZZ)\,.
    \] 
  Recall that any element $y\in H^1(U,\ZZ/n\ZZ)$ may be though of as an induced torsor and hence it can be represented by a cyclic degree $d|n$ extension $L/K$ which is unramified outside of $S$, together with a choice of generator $\sigma\in \Gal(L/K)$.

  We start by computing $c_y^\sim$ in the case where $S\neq \Omega_\RR$. By Lemma \ref{lem:dual-groups}, we may view $c_y^\sim \colon H^1_c(\Ufl,\mmu_n)\to H^2_c(\Ufl,\mmu_n)$ as a map $c_y^\sim\colon C_S(K)[n]\to C_S(K)/nC_S(K)$ which we now describe:

  \begin{lem}\label{lem:cup}
    Suppose that $S\neq \Omega_\RR$. Let $y\in H^1(U,\ZZ/n\ZZ)$ be represented by a cyclic extension $L/K$ of degree $d|n$ together with a choice of generator $\sigma\in \Gal(L/K)$. Then in view of Lemma \ref{lem:dual-groups}, we have that
      $$ c_y^\sim \colon C_S(K)[n]  \to C_S(K)/nC_S(K)
      $$
      is the map taking
      $$ \alpha  \mapsto \alpha^{n^2/2d}N_{L|K}(\beta)^{n/d},$$
    where $\beta$ is an element in $I_L$ such that $\alpha^{n/d}=t\beta/\sigma(\beta)$ in $I_L$, where $t\in L^\times$ satisfies $N_{L|K}(t)=\alpha^{-n}$ in $I_K$. Here $K^{\times}$ and $L^{\times}$ are embedded diagonally in $I_K$ and $I_L$, respectively.
  \end{lem}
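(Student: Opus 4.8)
The plan is to run the three-step zig-zag of Corollary \ref{cor:dual-cup}. By that corollary, $c_y^\sim$ is computed, reading right to left, by first applying $\Gamma_c(\hat f[1]^*)$, then $\Gamma_c(\pr^*)$, then inverting the quasi-isomorphism $\Gamma_c(q(\hat u)^*)$, and recording the effect on cohomology. Since Lemma \ref{lem:three-isos} identifies the relevant cohomology of these cochain complexes with the honest groups $H^1_c(\Ufl,\mmu_n)$ and $H^2_c(\Ufl,\mmu_n)$, and Lemma \ref{lem:dual-groups} identifies the latter with $C_S(K)[n]$ and $C_S(K)/nC_S(K)$, it suffices to trace an explicit cocycle through the zig-zag at the level of the complexes $\Gamma_c(\HOM(-,\C))$ built from the resolutions (\ref{res1}), (\ref{res2}) and $\K$. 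Because $S\neq\Omega_\RR$, the archimedean factors $\prod_{\nu\in\Omega_\RR}K_\nu^\times/2K_\nu^\times$ enter only through uniquely divisible contributions that drop out in the $n$-torsion and mod-$n$ quotients, so the bookkeeping may be carried out with the finite idèle data, exactly as in the proof of Lemma \ref{lem:dual-groups}.

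First I would make the Galois content explicit. As $y$ is represented by the torsor induced from $\Spec\OO_L\times_XU$ for a cyclic extension $L/K$ of degree $d\mid n$ with chosen generator $\sigma$, the sheaf $\pi_*\pi^*\ZZ$ and the terms of $\K$ become the standard $\ZZ[\Gal(L/K)]$-modules, and $\Gamma_c(\HOM(\pi_*\pi^*\Ei,\C))$ is computed by idèle and idèle-class objects over $L$. Under this dictionary the unit $\Delta\colon\ZZ\to\pi_*\pi^*\ZZ$ is the base-change inclusion $I_K\hookrightarrow I_L$ (resp.\ $K^\times\hookrightarrow L^\times$), the norm $\hat N$ is $N_{L|K}$, and the operator ``$e-1$'' built into $\K$ becomes $\sigma-1$ acting on $I_L$. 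Having fixed these identifications, I would write down each differential of the four complexes $\Gamma_c(\HOM(\Ei,\C))$, $\Gamma_c(\HOM(C(\hat u),\C))$, $\Gamma_c(\HOM(\K[1],\C))$ and $\Gamma_c(\HOM(\Ei[1],\C))$ as an explicit matrix in $n$, $\divis$, $\eta$, $\Delta$, $\sigma-1$ and $N_{L|K}$.

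Next comes the trace itself. Starting from a cocycle representing $\alpha\in C_S(K)[n]$, the map $\Gamma_c(\hat f[1]^*)$ lifts the datum to the $L$-level complex $\Gamma_c(\HOM(\K[1],\C))$, and $\Gamma_c(\pr^*)$ realizes the connecting homomorphism of the norm sequence (\ref{eq:tr-seq}). At cochain level this connecting map forces the coboundary relation for $\Gal(L/K)$ acting on $I_L$: up to the sign conventions of the duality pairing there exist $t\in L^\times$ and $\beta\in I_L$ as in the statement, where $\beta$ is produced by Hilbert 90 in the form $\ker N_{L|K}=\im(\sigma-1)$ on $I_L$ (valid since each local $\hat H^{-1}(L_w^\times)$ vanishes), and the norm of $t$ is pinned down by the hypothesis $\alpha\in C_S(K)[n]$. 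Finally, inverting $\Gamma_c(q(\hat u)^*)$ pushes the $L$-data back down through $\hat N=N_{L|K}$, producing the term $N_{L|K}(\beta)^{n/d}$ in $C_S(K)/nC_S(K)$; the remaining scalar $\alpha^{n^2/2d}$ comes from the part of $\hat f$ carrying the section $s\colon\ker\epsilon\to\ZZ/n\ZZ$, $e^l-1\mapsto l$, together with the truncation $P=\ZZ[G]/(\ker\epsilon)^2$ of diagram (\ref{eq:diag-for-cup}): the quadratic truncation contributes a triangular-number sum $\sum_l l$ while the induction index $n/d$ supplies the remaining powers, assembling to the exponent $n^2/2d$.

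The main obstacle is the cochain-level bookkeeping in this last step, and two points are the most delicate. First, computing $\pr^*$ with the non-derived functor $\Gamma_c$ requires choosing compatible lifts through the mapping cone $C(\hat u)$ and checking that the resulting equations over $L$ admit the stated solutions $(t,\beta)$; this is where the cyclicity of $L/K$ and the $n$-torsion of $\alpha$ are both used. Second, pinning down the precise coefficient $n^2/2d$ and the exact sign in the constraint $N_{L|K}(t)=\alpha^{-n}$ — rather than merely their values up to a unit — is subtle, since both are sensitive to the sign and section conventions fixed in diagrams (\ref{eq:diag-for-cup}) and (\ref{eq:diag-for-cup-2}); I would settle them by computing the $\Gal(L/K)$-module maps underlying $\hat f$ and $\K$ symbolically and sanity-checking against the case $d=n$, where $n^2/2d=n/2$ is forced by the self-cup Bockstein relation.
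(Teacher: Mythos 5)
Your overall strategy is the same as the paper's: run the zig-zag of Corollary \ref{cor:dual-cup} on an explicit cocycle $(b,\bb,\alpha_S)\in\ker d^0$ representing $\alpha$, identify the differentials of the four complexes as matrices in $n$, $\divis$, $\eta$, $N_{L|K}$ and $\sigma-1$ under the dictionary between $\pi$-split sheaves and $\ZZ[\Gal(L/K)]$-modules, and read off the class of the reduction in $C_S(K)/nC_S(K)$. Your heuristic for the exponent $n^2/2d$ (a triangular sum coming from the truncation $P=\ZZ[G]/(\ker\epsilon)^2$, sanity-checked against $d=n$) also matches what actually happens: in the paper the coefficient appears as $\tfrac{n(d+1)}{2}\cdot\tfrac{n}{d}$ via the auxiliary element $w=\prod_{j=0}^{d-1}\sigma^j\bigl(t^{\,n-jn/d}\bigr)$, and this agrees with $n^2/2d$ on $n$-torsion classes.

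However, there is a genuine gap at the one step that is not formal bookkeeping: the production of the pair $(t,\beta)$. You assert that $\beta$ is produced by Hilbert 90 in the form $\ker N_{L|K}=\im(\sigma-1)$ on $I_L$, justified by the vanishing of the local groups $\hat H^{-1}(G^v,(L^v)^\times)$. But the element you need to trivialize, namely the lift to $I_L$ of $\alpha^{n/d}$, is \emph{not} in the kernel of $N_{L|K}$ on $I_L$: its norm is $\alpha^{n}$, which by the cocycle condition is the idele $\eta(b^{-1})$ for a global $b\in K^\times$, and this is trivial only in $C_S(K)$, not in $I_K$. So Hilbert 90 for the idele group does not apply to it. What is actually needed --- and what the paper uses --- is $H^{1}(G_{L/K},C_S(L))=\ker N_{L|K}/(\sigma-1)C_S(L)=0$, i.e.\ the vanishing of $H^1$ of the $S$-idele \emph{class} group, a substantive theorem of global class field theory rather than a sum of local Hilbert 90's. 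It is exactly this vanishing that simultaneously produces the global element $t\in L^\times$ with $N_{L|K}(t)=\alpha^{-n}$ and the idele $\beta$ with $\alpha^{n/d}=t\beta/\sigma(\beta)$; your alternative of ``pinning down the norm of $t$ from the $n$-torsion hypothesis'' only tells you that $\alpha^{n}$ is represented by a global element of $K^\times$ (up to $S$-units), not that this element is a norm from $L^\times$. Once this input is corrected, the remaining cochain-level reduction proceeds as you describe and as in the paper.
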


  Before proving this, let us state the immediate consequence:

  \begin{thm}\label{thm:cup}
    Suppose that $S\neq \Omega_\RR$. Let $y$ and $z$ be elements in $H^1(U,\ZZ/n\ZZ)$. Suppose that $y$ is represented by a cyclic extension $L/K$, unramified outside of $S$, together with a choice of generator $\sigma \in \Gal(L/K)$, and assume that $L/K$ has degree $d|n$. Then under the identifications $H^1(U,\ZZ/n\ZZ)\cong (C_S(K)/nC_S(K))^\sim$ and $H^2(U,\ZZ/n\ZZ)\cong (C_S(K)[n])^\sim$ we have that $y\cupp z\in (C_S(K)[n])^\sim$ satisfies the formula
      \[
        \langle y\cupp z,\alpha\rangle=\langle z,\alpha^{n^2/2d}N_{L|K}(\beta)^{n/d}\rangle
      \]
    where $\langle -,- \rangle$ is the evaluation map and where $\beta$ is an element in $I_L$ such that $\alpha^{n/d}=t\beta/\sigma(\beta)$ in $I_L$ where $t\in L^\times$ satisfies $N_{L|K}(t)=\alpha^{-n}$ in $I_K$. Here $K^{\times}$ and $L^{\times}$ are embedded diagonally in $I_K$ and $I_L$, respectively.  
  \end{thm}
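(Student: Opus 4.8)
The plan is to deduce the theorem as a formal consequence of Lemma \ref{lem:cup} together with the perfectness of the flat Artin--Verdier pairing. First I would recall that, by the lemma identifying the connecting homomorphism of the transfer sequence (\ref{eq:trans-seq}) with cup product, we have $y\cupp z = c_y(z)$, where $c_y = y\cupp -\colon H^1(U,\ZZ/n\ZZ)\to H^2(U,\ZZ/n\ZZ)$. Thus it suffices to pair $c_y(z)$ against an arbitrary element $\alpha\in C_S(K)[n]\cong H^1_c(\Ufl,\mmu_n)$ and to read off the result from the explicit description of the dual map $c_y^\sim$.

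Next I would invoke flat Artin--Verdier duality to record the two perfect pairings
\[
  H^2(U,\ZZ/n\ZZ)\times H^1_c(\Ufl,\mmu_n)\to \QQ/\ZZ \quad\text{and}\quad H^1(U,\ZZ/n\ZZ)\times H^2_c(\Ufl,\mmu_n)\to \QQ/\ZZ,
\]
which, combined with Lemma \ref{lem:dual-groups}, identify $H^2(U,\ZZ/n\ZZ)$ with $(C_S(K)[n])^\sim$ and $H^1(U,\ZZ/n\ZZ)$ with $(C_S(K)/nC_S(K))^\sim$. By the very definition of the Pontryagin dual map $c_y^\sim\colon H^1_c(\Ufl,\mmu_n)\to H^2_c(\Ufl,\mmu_n)$, the adjunction identity
\[
  \langle c_y(z),\alpha\rangle = \langle z, c_y^\sim(\alpha)\rangle
\]
holds for every $z\in H^1(U,\ZZ/n\ZZ)$ and every $\alpha\in H^1_c(\Ufl,\mmu_n)$, where the left pairing is between $H^2(U,\ZZ/n\ZZ)$ and $H^1_c(\Ufl,\mmu_n)$ and the right pairing is between $H^1(U,\ZZ/n\ZZ)$ and $H^2_c(\Ufl,\mmu_n)$.

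Finally I would substitute the explicit formula for $c_y^\sim$ furnished by Lemma \ref{lem:cup}, namely $c_y^\sim(\alpha)=\alpha^{n^2/2d}N_{L|K}(\beta)^{n/d}$ with $\beta$ and $t$ as in the statement, into the right-hand side, while rewriting $c_y(z)=y\cupp z$ on the left. This produces exactly the asserted identity $\langle y\cupp z,\alpha\rangle = \langle z,\alpha^{n^2/2d}N_{L|K}(\beta)^{n/d}\rangle$. Since all the substantive work has already been carried out in Lemma \ref{lem:cup}, I expect no genuine obstacle here; the only point requiring care is the bookkeeping of ensuring that the identifications of $H^1(U,\ZZ/n\ZZ)$ and $H^2(U,\ZZ/n\ZZ)$ with the relevant Pontryagin duals are precisely the ones under which $c_y^\sim$ was computed, so that the evaluation map $\langle-,-\rangle$ appearing on the right is the intended pairing.
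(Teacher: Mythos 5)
Your proposal is correct and matches the paper's own treatment: the paper states Theorem \ref{thm:cup} as an ``immediate consequence'' of Lemma \ref{lem:cup}, obtained exactly by the adjunction identity $\langle c_y(z),\alpha\rangle=\langle z,c_y^\sim(\alpha)\rangle$ for the Artin--Verdier pairing, with all the substantive work residing in the computation of $c_y^\sim$ in the lemma.
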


  \begin{proof}[Proof of Lemma \ref{lem:cup}]
    First recall that if $K$ is totally imaginary or $n$ is odd, we need not take the infinite places into account.

    We will use Corollary \ref{cor:dual-cup} and thus compute  $H^1(\Gamma_c(q(\hat{u})^*))^{-1}\circ H^1(\Gamma_c(\pr^*))\circ H^1(\Gamma_c(\hat{f}[1]^*))$.
    Let $\alpha\in C_S(K)[n]$ be represented by $(b,\bb,\alpha_S)\in \ker d^0\subseteq K^\times\oplus \Div(U)\oplus (\prod_{p\in S}K_p^\times\oplus \prod_{\nu\in \Omega_\RR}K_\nu^\times)$ where $d^0$ is as in the proof of Lemma \ref{lem:dual-groups}.
    Let $S_f'$ be the set of finite places in $L$ lying above $S_f$ and let $\Omega'_\RR$ be the real places of $L$. We apply $\Gamma_c$ to the zig-zag (\ref{eq:berliner-zig-zag}) and we obtain:

      \begin{equation}\label{eq:big}\adjustbox{scale=0.8,center}{
      \begin{tikzcd}
      K^\times\ar{r}\ar{d} &
      \begin{matrix}
        K^\times \\ \oplus \\ \Div(U) \\ \oplus \\ \prod_{p\in S_f}K_p^\times \\ \oplus \\ \prod_{\nu\in \Omega_\RR}K_\nu^\times
      \end{matrix}\ar{r}\ar{d} &
      \begin{matrix}
        \Div(U) \\ \oplus \\ \prod_{p\in S_f}K_p^\times \\ \oplus \\ \prod_{\nu\in \Omega_\RR}K_\nu^\times
      \end{matrix}\ar{d}{\Gamma_c(q(\hat{u})^*)^2}
      \\
      (L^\times)^{n/d}\ar{r}{d_{\Gamma_c}^0} &
      \begin{matrix}
        (L^\times)^{n/d} \\ \oplus \\ (L^\times)^{n/d} \\ \oplus \\ \Div(Y)^{n/d} \\ \oplus \\ \prod_{p\in S_f'}(L_p^\times)^{n/d} \\ \oplus \\ \prod_{\nu\in \Omega'_\RR}(L_\nu^\times)^{n/d}
      \end{matrix}\ar{r}{d_{\Gamma_c}^1} &
      \begin{matrix}
        K^\times \\ \oplus \\ (L^\times)^{n/d} \\ \oplus \\ \Div(Y)^{n/d} \\ \oplus \\ \Div(Y)^{n/d} \\ \oplus \\ \prod_{p\in S_f'}(L_p^\times)^{n/d} \\ \oplus \\ \prod_{\nu\in \Omega'_\RR}(L_\nu^\times)^{n/d} \\ \oplus \\ \prod_{p\in S_f'}(L_p^\times)^{n/d} \\ \oplus \\ \prod_{\nu\in \Omega'_\RR}(L_\nu^\times)^{n/d}
      \end{matrix}\ar{r} &
      \begin{matrix}
        K^\times \\ \oplus \\ \Div(U) \\ \oplus \\ \Div(Y)^{n/d} \\ \oplus \\ \prod_{p\in S_f}K_p^\times \\ \oplus \\ \prod_{\nu\in \Omega_\RR}K_\nu^\times \\ \oplus \\ \prod_{p\in S_f'}(L_p^\times)^{n/d} \\ \oplus \\ \prod_{\nu\in \Omega'_\RR}(L_\nu^\times)^{n/d}
      \end{matrix}\ar{r} &
      \begin{matrix}
        \Div(U) \\ \oplus \\ \prod_{p\in S_f}K_p^\times \\ \oplus \\ \prod_{\nu\in \Omega_\RR}K_\nu^\times
      \end{matrix} \\
       & \begin{matrix}\prod_{p\in S_f}K_p^\times \\ \oplus \\ \prod_{\nu\in \Omega_\RR}K_\nu^\times\end{matrix}\ar{r}\ar{u} &
      \begin{matrix}
        K^\times \\ \oplus \\ (L^\times)^{n/d} \\ \oplus \\ \Div(Y)^{n/d} \\ \oplus \\ \prod_{p\in S_f'}(L_p^\times)^{n/d} \\ \oplus \\ \prod_{\nu\in \Omega'_\RR}(L_\nu^\times)^{n/d}
      \end{matrix}\ar{r}\ar{u}{\Gamma_c(\pr^*)^1} &
      \begin{matrix}
        K^\times \\ \oplus \\ \Div(U) \\ \oplus \\ \Div(Y)^{n/d} \\ \oplus \\ \prod_{p\in S_f}K_p^\times \\ \oplus \\ \prod_{\nu\in \Omega_\RR}K_\nu^\times \\ \oplus \\ \prod_{p\in S_f'}(L_p^\times)^{n/d} \\ \oplus \\ \prod_{\nu\in \Omega'_\RR}(L_\nu^\times)^{n/d}
      \end{matrix}\ar{r}\ar{u} &
      \begin{matrix}
        \Div(U) \\ \oplus \\ \prod_{p\in S_f}K_p^\times \\ \oplus \\ \prod_{\nu\in \Omega_\RR}K_\nu^\times
      \end{matrix}\ar{u} \\
      & K^\times\ar{r}\ar{u} &
      \begin{matrix}
        K^\times \\ \oplus \\ \Div(U) \\ \oplus \\ \prod_{p\in S_f}K_p^\times \\ \oplus \\ \prod_{\nu\in \Omega_\RR}K_\nu^\times
      \end{matrix}\ar{r}\ar{u}{\Gamma_c(\hat{f}^*)^1[1]} &
      \begin{matrix}
        \Div(U) \\ \oplus \\ \prod_{p\in S_f}K_p^\times \\ \oplus \\ \prod_{\nu\in \Omega_\RR}K_\nu^\times
      \end{matrix}\ar{u}
      \end{tikzcd}}\end{equation}
    where
    \[\begin{split}
      \Gamma_c(\hat{f}^*)^1[1](b,\bb, a) & = (b, i(b), i(a)) \\
      \Gamma_c(\pr^*)^1(b',b'',I,a') & = (b', b'', I, 0, a', 1)  \\
      \Gamma_c(q(\hat{u})^*)^2(J,a'') & = (1, 1, 0, i(J), 1, i(a''))\\
      d_{\Gamma_c}^1 & = \begin{pmatrix}
          N_{L|K} & 0 & 0 & 0 \\
          n & 1-\sigma' & 0 & 0 \\
          -\divis & 0 & 1-\sigma' & 0\\
          0 & -\divis & -n & 0 \\
          -\eta & 0 & 0 & 1-\sigma' \\
          0 & -\eta & 0 & -n
      \end{pmatrix}\,.
    \end{split}\]

    Here $a$ is an element in $\prod_{p\in S_f}K_p^\times \oplus \prod_{\nu\in \Omega_\RR}K_\nu^\times$ and $i$ is the canonical map
      \[
        i\colon \prod_{p\in S_f}K_p^\times \oplus \prod_{\nu\in \Omega_\RR}K_\nu^\times\to
        \prod_{p\in S_f'}(L_p^\times)^{n/d} \oplus \prod_{\nu\in \Omega'_\RR}(L_\nu^\times)^{n/d}\,.
      \]
    By $\Omega'_\RR$ we mean the set of places over $\Omega_\RR$, including the ones that ramify. 
    Note that in the matrix description of $d_{\Gamma_c}^1$, every summand of the form
      \[
        \begin{matrix}
          \prod_{p\in S_f'}(L_p^\times)^{n/d} \\ \oplus \\ \prod_{\nu\in \Omega'_\RR}(L_\nu^\times)^{n/d}
        \end{matrix}
      \]
    is treated as a \emph{single} summand.
    Furthermore, by abuse of notation, $\eta$ is the canonical map
      \[
        \eta\colon (L^\times)^{n/d} \to \prod_{p\in S_f'}(L_p^\times)^{n/d} \oplus \prod_{\nu\in \Omega'_\RR}(L_\nu^\times)^{n/d}\,,
      \]

      \[(\sigma'-1)(a)=\begin{pmatrix}\sigma(a_{n/d})/a_1 & a_1/a_2 & \dots & a_{(n/d)-1}/a_{n/d}\end{pmatrix}\,\]
    and
      \[N_{Y|U}(a)=\prod_{i=1}^{n/d}\prod_{j=0}^{d-1}\sigma(a_i)^j\,.\]
    Here $(a_1,\dots, a_n)$ is an element in $(L^\times)^{n/d}$, $\Div(Y)^{n/d}$, or $(\prod_{p\in S_f'}L_p^\times \oplus \prod_{\nu\in \Omega'_\RR}L_\nu^\times)^{n/d}$. 

    The element $(b,\bb,\alpha_S)\in \ker d^0\subseteq K^\times\oplus \Div(U)\oplus (\prod_{p\in S_f}K_p^\times\oplus \prod_{\nu\in \Omega_\RR}K_\nu^\times)$ representing $\alpha \in C_S(K)[n]$ is sent to
      \begin{equation}\label{eq:red-1}\begin{pmatrix}
        b & i(b) & i(\bb) & 0 & i(\alpha_S) & 1
      \end{pmatrix}\end{equation}
    via $\Gamma_c(\pr^*)^1\circ \Gamma_c(\hat{f}^*)^1$. We need to reduce this modulo the image of $d_{\Gamma_c}^1$ to get an element of the form
      \[\Gamma_c(q(\hat{u})^*)^2(J,a') = \begin{pmatrix} 1 & 1 & 0 & i(J) & 1 & i(a')\end{pmatrix}\,.\] The image of $\alpha$ under the connecting homomorphism is then given by the class of  $(J,a')$ in $C_S(K)/n.$
    We have seen in Section \ref{sec:cohomology-groups} seen that the canonical surjection
    \[
      C_S(K)\to (\Div U\oplus \prod_{p\in S}K_p^\times\oplus \prod_{\nu\in \Omega_\RR}K^\times_\nu/2K^\times_\nu)/K^\times
    \]
    induces an isomorphism on $n$-torsion.

    We have that $N_{L|K}(i(\alpha^{n/d}))=(\alpha^{n/d})^d=1$ in $C_S(K)$, since $\alpha$ is $n$-torsion. But
      \[H^{1}(G_{L/K},C_S(L))=\ker N_{L|K}/(\sigma-1)C_S(L)=0\]
    (see e.g. \cite[p.~620]{Neukirch--Schmidt--Wingberg}) and hence there exists a $\beta=(I,\beta)\in \Div(Y)\oplus (\prod_{p\in S_f}L_p^\times\oplus \prod_{\nu\in \Omega'_\RR}L_\nu^\times)$ and a $t\in L^\times$ such that
      \[\begin{split}
        \bb^{n/d}\OO_L & = \divis(t)I/\sigma(I)\,,\\
        i(\alpha_S^{n/d}) & = \eta(t)\beta/\sigma(\beta)\,.
      \end{split}\]
    Note here that $\Omega'_\RR$ includes complex places over $\Omega_\RR$. Taking the norm on both sides of the latter equality we get $\alpha_S^n=N_{L|K}(\eta(t))$ and since $\alpha_S^{n}=\eta(b^{-1})$ we get that
      \[N_{L|K}(t)=b^{-1}\,.\]
    We now put
      \[\begin{pmatrix}
        \underline{I} \\
        \underline{\beta}
      \end{pmatrix} = \begin{pmatrix}
        I & \bb I & \bb^2I & \dots & \bb^{\frac{n}{d}-1}I \\
        \beta & \alpha_S\beta & \alpha_S^2\beta & \dots & \alpha_S^{\frac{n}{d}-1}\beta
      \end{pmatrix}\in \begin{matrix}
        \Div(L)^{n/d} \\ \oplus \\ \left(\prod_{p\in S_f}L_p^\times\oplus \prod_{\nu\in \Omega'_\RR}L_\nu^\times\right)^{n/d}
      \end{matrix}\]
    which satisfies
      \[\begin{split}
        (\sigma'-1)(\underline{I}) & = \begin{pmatrix}
          \divis(t)\bb^{-1} & \bb^{-1} & \bb^{-1} & \dots & \bb^{-1}
        \end{pmatrix} \\
        (\sigma'-1)(\underline{\beta}) & = \begin{pmatrix}
          \eta(t)\alpha_S^{-1} & \alpha_S^{-1} & \alpha_S^{-1} & \dots & \alpha_S^{-1}
        \end{pmatrix}\,.
      \end{split}\]
    If we reduce (\ref{eq:red-1}) modulo
      \[\begin{pmatrix}
          N_{L|K} & 0 & 0 & 0 \\
          n & 1-\sigma' & 0 & 0 \\
          -\divis & 0 & 1-\sigma' & 0\\
          0 & -\divis & -n & 0 \\
          -\eta & 0 & 0 & 1-\sigma' \\
          0 & -\eta & 0 & -n
      \end{pmatrix}\begin{pmatrix}
        \underline{t}^{-1} \\ 1 \\ \underline{I} \\ \underline{\beta}
      \end{pmatrix}= \begin{pmatrix}
        N_{L|K}(\underline{t}^{-1}) \\
        \underline{t}^{-n} \\
        \divis(\underline{t})\underline{I}/\sigma'(\underline{I}) \\
        \underline{I}^{-n} \\
        \eta(\underline{t})\underline{\beta}/\sigma'(\underline{\beta}) \\
        \underline{\beta}^{-n}
      \end{pmatrix} = \begin{pmatrix}
        b \\
        \underline{t}^{-n} \\
        \bb \\
        \underline{I}^{-n} \\
        \alpha_S \\
        \underline{\beta}^{-n}
      \end{pmatrix}\]
    where
      \[\underline{t}=\begin{pmatrix}
        t & 1 & 1 & \dots & 1
      \end{pmatrix}\,,\]
    then we get
      \[\begin{pmatrix}
        b \\ b \\ \bb \\ 1 \\ \alpha_S \\ 1
      \end{pmatrix}+\begin{pmatrix}
        b^{-1} \\
        \underline{t}^{n} \\
        -\bb \\
        \underline{I}^n \\
        \alpha_S^{-1} \\
        \underline{\beta}^{n}
      \end{pmatrix}=\begin{pmatrix}
        1 \\
        b\underline{t}^{n} \\
        1 \\
        \underline{I}^n \\
        1 \\
        \underline{\beta}^{n}
      \end{pmatrix}\]
    since $bN_{L|K}(t)=b\alpha_S^{n}=1$.

    As in the proof of \cite[Lemma 3.7]{Ahlqvist--Carlson-cup} we now consider the element
      \[w=\prod_{j=0}^{d-1}\sigma^j(t^{n-jn/d})\]
    and put
      \[\underline{w}=\begin{pmatrix}
        w & bw & b^2w & \dots & b^{n/d-1}w
      \end{pmatrix}\,.\]
    We have
        \[\begin{pmatrix}
            N_{L|K} & 0 & 0 & 0 \\
            n & 1-\sigma' & 0 & 0 \\
            -\divis & 0 & 1-\sigma' & 0\\
            0 & -\divis & -n & 0 \\
            -\eta & 0 & 0 & 1-\sigma' \\
            0 & -\eta & 0 & -n
        \end{pmatrix}\begin{pmatrix}
          1 \\ \underline{w}^{-1} \\ 1 \\ 1
        \end{pmatrix}=\begin{pmatrix}
          1 \\ b^{-1}\underline{t}^{-n} \\ 1 \\  \divis(w) \\ 1 \\ \eta(w)
        \end{pmatrix}\]
    since
        \[\begin{split}
          (\sigma'-1)(\underline{w}) & = \begin{pmatrix}
            \sigma(wb^{n/d-1})w^{-1} & b^{-1} & b^{-1} & \dots & b^{-1}
          \end{pmatrix} \\
          & = \begin{pmatrix}
            b^{-1}t^{-n} & b^{-1} & b^{-1} & \dots & b^{-1}
          \end{pmatrix}
        \end{split}\]
    Hence we get in cohomology
        \[\begin{pmatrix}
          1 \\
          b\underline{t}^{n} \\
          1 \\
          \underline{I}^n \\
          1 \\
          \underline{\beta}^{n}
        \end{pmatrix}= \begin{pmatrix}
          1 \\
          b\underline{t}^{n} \\
          1 \\
          \underline{I}^n \\
          1 \\
          \underline{\beta}^{n}
        \end{pmatrix}+\begin{pmatrix}
          1 \\ b^{-1}\underline{t}^{-n} \\ 1 \\ \divis(w) \\ 1 \\ \eta(w)
        \end{pmatrix}=\begin{pmatrix}
          1 \\
          1 \\
          1 \\
          \underline{I}^n\divis(\underline{w})\\
          1 \\
          \underline{\beta}^{n}\eta(\underline{w})
        \end{pmatrix}\]
    We have
     \[\begin{split}
       \underline{I}^n\divis(\underline{w}) & = \begin{pmatrix}
         I^n\divis(w) & I^n\divis(w) & \dots & I^n\divis(w)
       \end{pmatrix}\,, \\
       \underline{\beta}^n\eta(\underline{w}) & = \begin{pmatrix}
         \beta^n\eta(w) & \beta^n\eta(w) & \dots & \beta^n\eta(w)
       \end{pmatrix}\,,
     \end{split}\]
    and since
      \[\begin{split}
        I^n\divis(w) & = I^n\prod_{j=0}^{d-1}\sigma^j(\divis(t))^{n-jn/d} \\
                     & = I^n\prod_{j=0}^{d-1}\sigma^j(\bb^{n/d}I^{-1}\sigma(I))^{n-jn/d} \\
                     & = I^n\bb^{\frac{n(d+1)}{2}\frac{n}{d}}I^{-n}(I\sigma(I)\dots\sigma^{d-1}(I))^{\frac{n}{d}} \\
                     & = \bb^{\frac{n(d+1)}{2}\frac{n}{d}}N_{L|K}(I)^{\frac{n}{d}}
      \end{split}\]
    and similarly
    \[\begin{split}
      \beta^n\eta(w) & = \beta^n\prod_{j=0}^{d-1}\sigma^j(\eta(t))^{n-jn/d} \\
                   & = \beta^n\prod_{j=0}^{d-1}\sigma^j(\alpha_S\sigma(\beta)/\beta)^{n-jn/d} \\
                   & = \beta^n\alpha_S^{\frac{n(d+1)}{2}\frac{n}{d}}\beta^{-n}N_{L|K}(\beta)^{n/d} \\
                   & = \alpha_S^{\frac{n(d+1)}{2}\frac{n}{d}}N_{L|K}(\beta)^{n/d}\,,
    \end{split}\]
    we get that
    \[\begin{pmatrix}
      1 \\
      1 \\
      1 \\
      \underline{I}^{n}\divis(\underline{w})\\
      1 \\
      \underline{\beta}^{n}\eta(\underline{w})
    \end{pmatrix}\sim \begin{pmatrix}
      1 \\
      1 \\
      1 \\
      i\left(\bb^{\frac{n(d+1)}{2}\frac{n}{d}}N_{L|K}(I)^{\frac{n}{d}}\right)\\
      1 \\
      i\left(\alpha_S^{\frac{n(d+1)}{2}\frac{n}{d}}N_{L|K}(\beta)^{n/d}\right)
    \end{pmatrix}\]
  This completes the proof.
  \end{proof}

Finally we are ready to compute the cup product for the case $S=\Omega_\RR$.  We proceed by computing the cup product $H^1(X,\ZZ/n\ZZ)\times H^1(X,\ZZ/n\ZZ)\to H^2(X,\ZZ/n\ZZ)$ and then sketch how to compute the higher cup products.

  \begin{lem}\label{lem:cup2}
    Suppose that $S=\Omega_\RR$. Let $y\in H^1(X,\ZZ/n\ZZ)$ be represented by a cyclic extension $L/K$ of degree $d|n$ together with a choice of generator $\sigma\in \Gal(L/K)$. Then in view of Lemma \ref{lem:dual-groups}, we have that
      \[c_y^\sim \colon Z^1/B^1  \to \Cl^+(K)/n\Cl^+(K) \]
      is the map
 	\[ (b,\bb)  \mapsto \bb^{n^2/2d}N_{L|K}(I)^{n/d}\,,\]
    where $I$ is an element in $\Div(L)$ such that $\bb^{n/d}=\divis(t)I/\sigma(I)$, where $t\in L^\times_+$ satisfies $N_{L|K}(t)=b^{-1}$.
  \end{lem}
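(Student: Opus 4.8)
The plan is to run the proof of Lemma \ref{lem:cup} essentially verbatim, replacing the idèlic data at the finite places of $S$ by divisorial data and tracking carefully the archimedean components, which in the present case ($S=\Omega_\RR$, so $S_f=\nil$ and $U=X$) carry the totally positive conditions. Concretely, I would start from Corollary \ref{cor:dual-cup}, which reduces the computation of $c_y^\sim$ to evaluating
\[
  H^1(\Gamma_c(q(\hat{u})^*))^{-1}\circ H^1(\Gamma_c(\pr^*))\circ H^1(\Gamma_c(\hat{f}[1]^*))
\]
on $H^1_c(X_\fl,\mmu_n)\cong Z^1/B^1$. Applying $\Gamma_c$ to the zig-zag (\ref{eq:berliner-zig-zag}) produces the same diagram as (\ref{eq:big}), except that every factor $\prod_{p\in S_f}K_p^\times$ and $\prod_{p\in S_f'}(L_p^\times)^{n/d}$ is now absent; only the archimedean factors $\prod_{\nu\in\Omega_\RR}K_\nu^\times$ and $\prod_{\nu\in\Omega'_\RR}(L_\nu^\times)^{n/d}$ survive, where $\Omega'_\RR$ again includes the complex places of $L$ over ramified real places of $K$. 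The differential $d_{\Gamma_c}^1$ is the matrix displayed in the proof of Lemma \ref{lem:cup} with its finite rows and columns deleted.

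I would then represent a class in $Z^1/B^1$ by a cocycle $(b,\bb,(\xi_\nu)_\nu)\in\ker d^1$, where the real components $\xi_\nu\in K_\nu^\times/2K_\nu^\times$ are precisely the signs forcing $b\in K_+^\times$ when $n$ is even (for $n$ odd the archimedean places are dropped, exactly as in Lemma \ref{lem:cup}). Pushing this through $\Gamma_c(\pr^*)^1\circ\Gamma_c(\hat{f}^*)^1$ gives the analogue of (\ref{eq:red-1}), and the task is again to reduce it modulo $\im d_{\Gamma_c}^1$ into the image of $\Gamma_c(q(\hat{u})^*)^2$, so that the resulting divisor records the class in $\Cl^+(K)/n\Cl^+(K)$.

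The step I expect to be the main obstacle is producing $t\in L^\times$ and $I\in\Div(L)$ with
\[
  \bb^{n/d}\OO_L=\divis(t)\,I/\sigma(I),\qquad N_{L|K}(t)=b^{-1},
\]
together with the archimedean witness $\beta$, in place of the appeal to $H^1(G_{L/K},C_S(L))=0$ used in Lemma \ref{lem:cup}. Since $\bb^n=\divis(b)^{-1}$ with $b$ totally positive, the class $[\bb^{n/d}\OO_L]$ is killed by $N_{L|K}$, so it is annihilated in the relevant $\ker N_{L|K}/(\sigma-1)(-)$. The cleanest way to obtain the descent is to view this datum inside the full idèle class group $C_L$: here the hypothesis that $L/K$ is unramified at all finite places is essential, since it makes the finite local units $\prod_p\OO_p^\times$ cohomologically trivial (each is induced from a decomposition group acting on an unramified local unit group), so that the norm-triviality becomes a class in $\ker N_{L|K}\subseteq C_L$, and this lies in $(\sigma-1)C_L$ because $H^1(G_{L/K},C_L)=\ker N_{L|K}/(\sigma-1)C_L=0$ by Hilbert~90. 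Writing the resulting equality $i(\cdots)=(\sigma-1)(\cdots)$ at the level of idèles and projecting to divisors yields $I$ and the principal part $t$, while the archimedean coordinates yield $\beta$; taking norms recovers $N_{L|K}(t)=b^{-1}$. The delicate points are checking that the totally positive sign conditions are preserved under this descent and that the ramified real places (complex in $L$) contribute trivially, which is exactly the bookkeeping the archimedean summands are designed to handle.

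With $t$ and $I$ in hand the remainder is purely formal and identical in shape to the end of the proof of Lemma \ref{lem:cup}. I would set
\[
  \underline I=\begin{pmatrix} I & \bb I & \cdots & \bb^{\frac nd-1}I\end{pmatrix},\qquad
  w=\prod_{j=0}^{d-1}\sigma^j\!\big(t^{\,n-jn/d}\big),\qquad
  \underline w=\begin{pmatrix} w & bw & \cdots & b^{\frac nd-1}w\end{pmatrix},
\]
subtract the boundaries $(\sigma'-1)(\underline I)$ and $(\sigma'-1)(\underline w)$ produced by $d_{\Gamma_c}^1$, and compute as there that the divisor coordinate becomes $\bb^{\frac{n(d+1)}2\frac nd}N_{L|K}(I)^{n/d}$. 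Finally, since $\bb^n=\divis(b^{-1})$ is trivial in $\Cl^+(K)$ (because $b^{-1}$ is totally positive), the exponent $\frac{n(d+1)}2\frac nd$ matches the normalization $\frac{n^2}{2d}$ used in the statement, so the class in $\Cl^+(K)/n\Cl^+(K)$ is $\bb^{n^2/2d}N_{L|K}(I)^{n/d}$, which is the asserted formula and completes the proof.
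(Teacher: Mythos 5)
Your overall strategy coincides with the paper's: both run the proof of Lemma \ref{lem:cup} through the diagram (\ref{eq:big}) with the finite-place summands deleted, and the formal endgame with $\underline{I}$, $w=\prod_j\sigma^j(t^{n-jn/d})$, and $\underline{w}$ is identical. The one step where this lemma genuinely differs from Lemma \ref{lem:cup} is the production of $t\in L^\times$ and $I\in\Div(L)$ with $\bb^{n/d}\OO_L=\divis(t)I/\sigma(I)$ and $N_{L|K}(t)=b^{-1}$ \emph{exactly}, and this is where your argument has a gap. Whether you apply $H^1(G_{L/K},C_S(L))=0$ (as the paper does) or $H^1(G_{L/K},C_L)=0$ (as you propose), the descent takes place in a quotient of $I_L$, and upon lifting back to idèles and taking norms you a priori only recover $N_{L|K}(a)=b^{-1}u$ for some global unit $u\in\OO_K^\times$ --- totally positive because of the archimedean components --- rather than $b^{-1}$ itself. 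Your sentence ``taking norms recovers $N_{L|K}(t)=b^{-1}$'' asserts precisely the nontrivial point. The paper eliminates $u$ by invoking Hasse's norm theorem (using that units are local norms at the unramified finite places and that $u$ is positive at the real places) to write $u^{-1}=N_{L|K}(v)$, then Hilbert 90 for ideals to absorb $\divis(v)$ into $I$, and finally Hilbert 90 for idèles to correct the archimedean witness $\beta$; none of this appears in your write-up.

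Your alternative of working in the full $C_L$ can be made to work, but only if you first choose an idèle lift $\gamma$ of the pair $(\bb^{n/d}\OO_L,\,i(\eta(b^{-1/d})))$ with $N_{L|K}(\gamma)=b^{-1}$ on the nose; that choice is possible exactly because local units are norms at unramified finite places and because $b^{-1}$ is positive at the real places of $K$ that become complex in $L$, where the local norm $\CC^\times\to\RR^\times$ only hits $\RR_{>0}$. This is the same arithmetic input as the paper's Hasse-norm-theorem paragraph, relocated into the choice of lift; it is not subsumed by the ``bookkeeping'' you defer. Supply that verification and the rest of your argument --- including the closing observation that $\bb^{n^2/2}$ is trivial in $\Cl^+(K)$ because $\bb^n=\divis(b^{-1})$ with $b^{-1}$ totally positive, which reconciles the exponent $\tfrac{n(d+1)}{2}\tfrac{n}{d}$ with the stated $n^2/2d$ --- goes through as in the paper.
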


  The following theorem is an immediate consequence:

  \begin{thm}\label{thm:cup2}
    Suppose that $S= \Omega_\RR$. Let $y$ and $z$ be elements in $H^1(X,\ZZ/n\ZZ)$. Suppose that $y$ is represented by a cyclic extension $L/K$, unramified outside of $S$, together with a choice of generator $\sigma \in \Gal(L/K)$, and assume that $L/K$ has degree $d|n$. Then under the identifications $H^1(X,\ZZ/n\ZZ)\cong (\Cl^+(K)/n\Cl^+(K))^\sim$ and $H^2(X,\ZZ/n\ZZ)\cong (Z^1/B^1)^\sim$ we have that $y\cupp z\in (Z^1/B^1)^\sim$ satisfies the formula
      \[
        \langle y\cupp z,(b,\bb)\rangle=\langle z,\bb^{n^2/2d}N_{L|K}(I)^{n/d}\rangle
      \]
    where $I$ is an element in $\Div(L)$ such that $\bb^{n/d}=\divis(t)I/\sigma(I)$, where $t\in L^\times_+$ satisfies $N_{L|K}(t)=b^{-1}$.
  \end{thm}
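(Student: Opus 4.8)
The plan is to deduce Theorem~\ref{thm:cup2} from Lemma~\ref{lem:cup2} by a purely formal dualization, exactly parallel to the way Theorem~\ref{thm:cup} was obtained from Lemma~\ref{lem:cup}. All of the genuine computational content has already been carried out in Lemma~\ref{lem:cup2}, where the dual map $c_y^\sim$ was computed explicitly; what remains is only to transport that description back across flat Artin--Verdier duality.

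First I would recall that cup product with $y$ is the map $c_y = y\cupp-\colon H^1(X,\ZZ/n\ZZ)\to H^2(X,\ZZ/n\ZZ)$, so that $y\cupp z = c_y(z)$. By construction $c_y^\sim$ is the Pontryagin dual of $c_y$ under flat Artin--Verdier duality, which in the present case $S=\Omega_\RR$ pairs $H^2(X,\ZZ/n\ZZ)\cong (Z^1/B^1)^\sim$ perfectly with $H^1_c(X_\fl,\mmu_n)=Z^1/B^1$, and $H^1(X,\ZZ/n\ZZ)\cong(\Cl^+(K)/n\Cl^+(K))^\sim$ perfectly with $H^2_c(X_\fl,\mmu_n)=\Cl^+(K)/n\Cl^+(K)$ (Lemma~\ref{lem:dual-groups} and Theorem~\ref{thm:coh-groups}). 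The defining adjunction of the dual map then reads
\[
  \langle c_y(z),w\rangle=\langle z,c_y^\sim(w)\rangle
\]
for every $z\in H^1(X,\ZZ/n\ZZ)$ and every $w\in H^1_c(X_\fl,\mmu_n)=Z^1/B^1$, where on the left $\langle-,-\rangle$ is the duality pairing between $H^2(X,\ZZ/n\ZZ)$ and $Z^1/B^1$ and on the right it is the pairing between $H^1(X,\ZZ/n\ZZ)$ and $\Cl^+(K)/n\Cl^+(K)$.

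Next I would simply substitute. Taking $w=(b,\bb)\in Z^1/B^1$ and inserting the formula $c_y^\sim(b,\bb)=\bb^{n^2/2d}N_{L|K}(I)^{n/d}$ supplied by Lemma~\ref{lem:cup2}, the adjunction above yields
\[
  \langle y\cupp z,(b,\bb)\rangle=\langle c_y(z),(b,\bb)\rangle=\langle z,c_y^\sim(b,\bb)\rangle=\langle z,\bb^{n^2/2d}N_{L|K}(I)^{n/d}\rangle,
\]
which is exactly the asserted formula, with $I\in\Div(L)$ and $t\in L^\times$ satisfying $\bb^{n/d}=\divis(t)I/\sigma(I)$ and $N_{L|K}(t)=b^{-1}$ as in Lemma~\ref{lem:cup2}.

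Since the argument is formal, I do not expect any real obstacle here; the substance lies entirely in Lemma~\ref{lem:cup2}. The only points that need a word of justification are bookkeeping ones: that the evaluation pairing $\langle-,-\rangle$ appearing in the statement is precisely the Artin--Verdier pairing under which $c_y^\sim$ is by definition adjoint to $c_y$, and that the identifications of the cohomology groups used are exactly those recorded in Lemma~\ref{lem:dual-groups}. Both are immediate from the constructions of Section~\ref{sec:cup}, so the theorem follows at once from Lemma~\ref{lem:cup2}.
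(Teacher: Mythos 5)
Your proposal is correct and matches the paper exactly: the paper states Theorem~\ref{thm:cup2} as an immediate consequence of Lemma~\ref{lem:cup2}, obtained by precisely the formal adjunction $\langle c_y(z),w\rangle=\langle z,c_y^\sim(w)\rangle$ across the Artin--Verdier pairing that you spell out. All the substance is indeed in Lemma~\ref{lem:cup2}, and your bookkeeping of which compactly supported group pairs with which cohomology group is the correct one from Lemma~\ref{lem:dual-groups}.
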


  \begin{proof}[Proof of Lemma \ref{lem:cup2}]
    Since the case where $K$ is totally imaginary or $n$ is odd is covered in \cite{Ahlqvist--Carlson-cup}, we may assume that $K$ is not totally imaginary and that  $n$ is even. We have $H^1_c(X_\fl,\mu_n)\cong Z^1/B^1$
    where
      \[
          \begin{split}
            Z^1 & = \{(b,\bb)\in K_+^\times\oplus \Div X : \divis(b)\bb^n=1\}\,, \\
            B^1 & = \{(a^{-n},\divis(a)):a\in K^\times_+\}\,.
          \end{split}
      \]

    The proof is very similar to the proof of \cite[Lemma 3.7]{Ahlqvist--Carlson-cup}. We will again consider the zig-zag represented by the big diagram \ref{eq:big}. Again, $(b,\bb,1)\in \ker d^0\subseteq K^\times\oplus \Div(K)\oplus \prod_{\nu\in \Omega_\RR}K_\nu^\times$ is sent to
      \begin{equation}\label{eq:red-2}\begin{pmatrix}
        b & i(b) & i(\bb) & 0 & 1 & 1
      \end{pmatrix}\end{equation}
    via $\Gamma_c(\pr^*)^1\circ \Gamma_c(\hat{f}^*)^1$. We need to reduce this modulo the image of $d_{\Gamma_c}^1$ to get an element of the form
      \[\Gamma_c(q(\hat{u})^*)^2(J,a') = \begin{pmatrix} 1 & 1 & 0 & i(J) & 1 & i(a')\end{pmatrix}\,.\]

    The pair $(\bb^{n/d}\OO_L,i(\eta(b^{-1/d})))$ defines a class in $C_S(L)$ (this time including complex places) which lies in the kernel of $N_{L|K}\colon C_S(L)\to C_S(K)$ since $i(\eta(b^{-1/d}))$ has norm $\eta(b^{-1})$. But
      \[
        H^{1}(G_{L/K},C_S(L))=\ker N_{L|K}/(\sigma-1)C_S(L)=0
      \]
    and hence there exists a pair $(\bb',\tilde{\beta})\in \Div(L)\oplus \prod_{\nu\in \Omega'_\RR}L_\nu^\times$ and $a\in L^\times$ such that
      \[\begin{split}
        \bb^{n/d}\OO_L & = \divis(a)\bb'/\sigma(\bb')\,, \\
        i(\eta(b^{-1/d})) & = \eta(a)\tilde{\beta}/\sigma(\tilde{\beta})\,.
      \end{split}\,,\]
    for some $a\in L^\times_+$. 
    Taking the norm on both sides we get $\divis(b^{-1})=\bb^n=\divis(N_{L|K}(a))$ and hence $N_{L|K}(a)=b^{-1}u$ for some unit $u\in K^\times$. From the second equation we get $\eta(b^{-1})=N_{L|K}(\eta(a))$ and hence $N_{L|K}(a)$ is totally positive. Since $b^{-1}$ is totally positive we get that $u$ is totally positive.

    Units are always norms in unramified extensions of local fields and $u$ is totally positive. Hence Hasse's norm theorem \cite{HasseNorm} implies that there is a $v\in L^\times$ such that $u^{-1}=N_{L|K}(v)$. Since $N_{L|K}(\divis(v))$ is the unit ideal, Hilbert 90 for ideals implies that there is a $J\in \Div(L)$ such that $\divis(v)=J/\sigma(J)$. We now put $t=av$ and $I=\bb'/J$. Then
      \[
        \bb^{n/d}\OO_L=\divis(av)(\bb'/J)/(\sigma(\bb'/J))=\divis(t)I/\sigma(I)\,.
      \]
    Note that $N_{L|K}(\eta(i(b^{-1/d})t^{-1}))=1$. By Hilbert's theorem 90 for idèles, we have
      \[
        H^1(G_{L/K},I_L)\cong \ker N_{L|K}/(1-\sigma)I_L=1
      \]
    and hence there exists a $\xi\in I_L$ (with a one in every non-archimedean component) such that $\eta(i(b^{-1/d})t^{-1})=\xi/\sigma(\xi)$. Put $\beta'=\beta\xi$, where $\beta\in \prod_{\nu\in \Omega'_\RR}L_\nu^\times$ is the class represented by $\tilde{\beta}$.

    We now put
      \[\begin{pmatrix}
        \underline{I} \\
        \underline{\beta}
      \end{pmatrix} = \begin{pmatrix}
        I & \bb I & \bb^2I & \dots & \bb^{\frac{n}{d}-1}I \\
        \beta' & \beta' & \beta' & \dots & \beta'
      \end{pmatrix}\in \begin{matrix}
        \Div(L)^{n/d} \\ \oplus \\  (\prod_{\nu\in \Omega'_\RR}L_\nu^\times)^{n/d}
      \end{matrix}\]
    and the rest follows word by word as the remainder of the proof of Lemma \ref{lem:cup} starting from page 16.
  \end{proof}

 Let us summarize what we know so far about the ring structure of $H^*(U,\ZZ/n\ZZ)$. First note that if $K$ is totally imaginary or $n$ is odd, then the picture is complete since the case $U=X$ was treated in \cite{Ahlqvist--Carlson-cup} and the case $U\neq X$ is completely determined by Theorem \ref{thm:cup}.

  When $K$ has real places and $n$ is even, we have that $H^i(U,\ZZ/n\ZZ)\neq 0$ for $i\geq 3$. Hence it remains to compute the cup products $H^i(U,\ZZ/n\ZZ)\times H^j(U,\ZZ/n\ZZ)\to H^{i+j}(U,\ZZ/n\ZZ)$ for $i + j \geq 3$.
  For $i\geq 3$, the restriction map $H^i(U,\ZZ/n\ZZ)\to \prod_{\nu\in \Omega_\RR}H^i(\Spec K_\nu,\ZZ/n\ZZ)$ is an isomorphism and the right hand side is the group cohomology of $\ZZ/n\ZZ$ viewed as a $\Gal(\CC/\RR)$-module. The cup product can then be determined as follows:
  For $i$ and $j$ such that $i+j \geq 3$, let $(x,y)\in H^i(U,\ZZ/n\ZZ)\times H^j(U,\ZZ/n\ZZ)$ and let $\rho\colon H^*(U,\ZZ/n\ZZ)\to \prod_{\nu\in \Omega_\RR}H^*(\Spec K_\nu,\ZZ/n\ZZ)$ be the restriction.
  Then we have that $x\cupp y$ is the unique element in $H^{i+j}(U,\ZZ/n\ZZ)$ restricting to $r(x)\cupp r(y)\in \prod_{\nu\in \Omega_\RR}H^{i+j}(\Spec K_\nu,\ZZ/n\ZZ)$.
  We know that $H^1(U,\ZZ/n\ZZ)$ classifies torsors and $H^2(U,\ZZ/n\ZZ)$ classifies gerbes. The restriction to a real place then simply tests if the torsor or gerbe is trivial or not over that place.
  Hence we have now determined the whole ring structure of $H^*(U,\ZZ/n\ZZ)$, by essentially arguing that it reduces to group cohomology in the degrees we have not discussed in detail in this section.

\section{Applications}\label{sec:examples}

\subsection*{Class field tower groups}
Let $K$ be a number field and $S$ a finite set of places of $K$ (including the infinite places). Fix a prime number $p$ and let $G=G_S$ be the Galois group of the maximal pro $p$-extension of $K$, unramified outside of $S$. Choose a minimal presentation by free pro $p$-groups 
  \[
    1\to R \to F \to G \to 1  
  \]
and let $d(G)$ and $r(G)$ be the least number of generators of $F$ and of $R$ as a normal subgroup of $F$ respectively. Then we have $d(G)=H^1(G, \ZZ/p\ZZ)$ and $r(G)=H^2(G, \ZZ/p\ZZ)$. Let $F=F_1\supseteq F_2\supseteq F_3\supseteq \dots$ be the Zassenhaus filtration, choose a generating set $\Gen(R)\subseteq R$ and define $r_k$ to be the number of elements $g\in \Gen(R)$ for which $k$ is the largest integer such that $g\in F_k\setminus F_{k+1}$. It is possible to choose $\Gen(R)$ in such a way that the \emph{Zassenhaus polynomial}
  \[
    Z_G(t) = 1-dt+\sum_{i=2}^{\infty} r_kt^k   
  \]
becomes an invariant of $G$ (see e.g. \cite{Mcleman}). An improved version (see \cite{Koch-Galois}) of the celebrated result of Golod--Shafarevich states that if $G$ is finite then $Z_G(t)>0$ for all $t\in (0,1)$. This result becomes even more powerful if it possible to determine the numbers $r_k$. By \cite[Theorem 7.23]{Koch-Galois}, the cup products will determine $r_2$. Furthermore, we have natural maps $H^i(G, \ZZ/p\ZZ)\to H^i(U, \ZZ/p\ZZ)$ preserving the cup product, where the right hand side denotes the \'etale cohomology of the scheme $U=\Spec \OO_K\setminus\{S_f\}$. We will show that we may find $r_2$ by computing cup products via our formulas of Section \ref{sec:cup} (general case), our previous paper \cite{Ahlqvist--Carlson-cup} (proper case), or \cite{CarlsonSchlankUnramified} (proper case and $p=2$). 

\begin{ex}
  Let $K=\QQ(\sqrt{-3\cdot13\cdot17\cdot29\cdot113})$ and choose bases $\{x_i\}_{1\leq i\leq d(G)}\subseteq H^1(X, \ZZ/2\ZZ)$ and $\{b_k\}_{1\leq k\leq r_{\text{\'et}}(G)}\subseteq \Hom(H^2(X, \ZZ/2\ZZ), \ZZ/2\ZZ)$ where $r_{\text{\'et}}(G)$ is the rank of $H^2(X, \ZZ/2\ZZ)$. Let $G$ be the maximal unramified pro 2-extension of $K$. Using a C program that we have written using the library PARI \cite{PARI2}, we see that $\Cl(K)$ has cycle type $[16,4,4,2]$ and the $d(G)^2\times r_{\text{\'et}}(G)$ cup products matrix $(\langle x_i\cupp x_j, b_k\rangle)_{(i,j), k}$ has rank 3. We have $d(G)=\rk \Cl(K)/2\Cl(K)=4$ and since $\rk \OO_K^\times/2\OO_K^\times=1$, we se that $r(G)$ is either 4 or 5. This means that the Zassenhaus polynomial will be either $1-4t+3t^2+t^3$ or $1-4t+3t^2+2t^3$, which both have zeros in the interval $(0,1)$, and we conclude that $G$ must be infinite.  
\end{ex}

Let $G=P_0(G)\supseteq P_1(G)\supseteq P_2(G) \supseteq \dots$ be the lower $p$-central series of $G=G_S$ and define $Q_n(G)=G/P_n(G)$ (beware that we start our indexing from 0). For instance we have $Q_0(G)=1$ and $Q_1(G)=G/G^p[G,G] \cong C_S(K)/pC_S(K)$. Let $d=d(G)$, $r'=r_{\text{\'et}}(G)\geq r(G)$, and choose bases $\{x_i\}_{1\leq i\leq d}\subseteq H^1(U, \ZZ/p\ZZ)$ and $\{b_k\}_{1\leq k\leq r'}\subseteq \Hom(H^2(U, \ZZ/p\ZZ), \ZZ/p\ZZ)\cong H^1_c(\Ufl, \mmu_p)$. By computing cup products in \'etale cohomology we find a presentation for $Q_2(G)$:

\begin{prop}
  Let $1\to R\to F\to G\to 1$ be a minimal presentation with $F$ a free pro-$p$-group generated by $a_1, \dots, a_d$ and let $x_1,\dots, x_d$ be the corresponding elements forming a basis for $H^1(U, \ZZ/p\ZZ)$. Then $Q_2(G)\cong Q_2(F)/C$ where $C$ is the normal subgroup generated by the set
    \[
      \left\{\prod_{1\leq i\leq j\leq d(G)}(a_i, a_j)^{-\langle c_{ij}, b_k\rangle}\right\}_{1\leq k\leq r_{\text{\'et}}(G)}\,,
    \] 
  where 
    \[
      (a_i,a_j) = \begin{cases}
        a_i^p\,, & i=j\,, \\
        a_ia_ja_i^{-1}a_j^{-1}\,, & i\neq j\,,
      \end{cases}  
    \] 
    \[
      c_{ij}=\begin{cases}
        B(x_i)\,, & i= j\,, \\
        x_i\cupp x_j\,, & i\neq j\,.
      \end{cases}  
    \]
  and $B\colon H^1(X, \ZZ/p\ZZ)\to H^2(X, \ZZ/p\ZZ)$ is the Bockstein homomorphism. 
\end{prop}

\begin{proof}
  Let $g_1, \dots, g_r$ be a generating set of elements of $R$ and let $\chi_1, \dots \chi_d$ be elements mapping to $x_1, \dots, x_d$ under the canonical isomorphism $H^1(G, \ZZ/p\ZZ)\cong H^1(U, \ZZ/p\ZZ)$. By \cite[Theorem 7.22, 7.23, 7.24]{Koch-Galois} we may write 
    \begin{equation}\label{eq:relations}
      g_k = \left(\prod_{1\leq i\leq d(G)}a_i^{-\varphi_k(B(x_i))p}\prod_{1\leq i< j\leq d(G)}(a_i, a_j)^{-\varphi_k(\chi_i\cupp \chi_j)}\right)g_k'
    \end{equation} 
  where $g_k'\in P_3(G)$ and $\varphi_k\colon H^2(G,\ZZ/p\ZZ)\to \ZZ/p\ZZ$ are defined by $\varphi_k(\alpha)=(\text{tra}^{-1}(\alpha))(g_k)$ with 
    \[
      \text{tra}\colon H^1(R, \ZZ/p\ZZ)^G\cong H^2(G, \ZZ/p\ZZ)\,.
    \]
  These elements $\varphi_k$ form a basis for the dual group $H^2(G,\ZZ/p\ZZ)^\vee$. Furthermore, every basis of $H^2(G,\ZZ/p\ZZ)^\vee$ arise in this way, so we may in fact take the basis $\{\varphi_k\}_k$ to be arbitrary and there will be set $\{g_1, \dots, g_r\}$ as above such that $\varphi_k(\alpha)=(\text{tra}^{-1}(\alpha))(g_k)$. 
  
  We have a canonical injection $H^2(G, \ZZ/p\ZZ)\hookrightarrow H^2(X, \ZZ/p\ZZ)$ which gives a surjection $H^2(U, \ZZ/p\ZZ)^\vee\to H^2(G,\ZZ/p\ZZ)^\vee$. Since we have an isomorphism in degree one: $H^1(G, \ZZ/p\ZZ)\cong H^1(X, \ZZ/p\ZZ)$ and the maps $H^i(G, \ZZ/p\ZZ)\to H^i(X, \ZZ/p\ZZ)$ commute with cup products in the obvious sense (see \cite[Section 3]{AhlqvistCarlson+2025}), the cup product $H^1(X, \ZZ/p\ZZ)\otimes H^1(X, \ZZ/p\ZZ)\to H^2(X, \ZZ/p\ZZ)$ takes values in the image of $H^2(G, \ZZ/p\ZZ)\hookrightarrow H^2(X, \ZZ/p\ZZ)$. The proposition now follows from noting that the following diagram commutes:
    \[
      \begin{tikzcd}
        H^2(U,\ZZ/p\ZZ)\times H^{2}_c(\Ufl,\mmu_p)\ar{r}{\langle-,-\rangle}\ar[shift left=15, twoheadrightarrow]{d}{} & H^{3}_c(\Ufl,\mmu_p)\ar{d}{\inv}[swap]{\cong} \\
        H^2(U,\ZZ/p\ZZ)\times H^2(U,\ZZ/p\ZZ)^\vee\ar{r}{\text{eval}}\ar[shift left=15, twoheadrightarrow]{d}{}\ar[shift left=15, equal]{u} & \ZZ/p\ZZ\ar[equal]{d} \\
        H^2(G,\ZZ/p\ZZ)\times H^{2}(G,\ZZ/p\ZZ)^\vee\ar{r}{\text{eval}}\ar[shift left=15, hookrightarrow]{u} & \ZZ/p\ZZ
      \end{tikzcd}
    \]
  commutes. The upper diagram commutes by Artin--Verdier duality and the lower diagram obviously commutes. Now just choose lifts $b_1,\dots,b_r\in H^{2}_c(\Ufl,\mmu_p)$ of the elements $\varphi_1,\dots, \varphi_r$ and extend them to a basis of $H^{2}_c(\Ufl,\mmu_p)$. This gives the statement of the proposition. 
\end{proof}

\begin{ex}
The field $K=\QQ(\sqrt{-5460})$ is of great interest since if it has infinite class field tower it would decrease the Odlyzko root discriminant bound. Martinet have conjectured that all imaginary quadratic fields with 2-class group of rank at least 4 (including this example) have infinite 2-class field tower. On the other hand, Boston--Wang has conjectured that $K$ has finite 2-class field tower \cite{Boston--Wang}. Using our program we see that $Q_2(G)$ has order $2^9$ and presentation
  \[
    \langle a,b,c,d : (a,b)(a,c)(b,c), a^2(a,c)b^2(b,d), (a,c)b^2c^2, a^2(a,b)(c,d), a^2(a,c)(a,d)b^2d^2, R_3\rangle\,,  
  \]
where $R_3=P_2(\Free(a,b,c,d))$ (indexing from 0, so $P_0(F)=F$, $P_1(F)=F^p[F,F]$ etc.).
\end{ex}

\begin{ex}
Similarly to the previous example, we may consider the real quadratic field $K=\QQ(\sqrt{1155})$. Using our program we see that $Q_2(G)$ has order $2^9$ and presentation
  \[
    \langle a,b,c,d : (a,c)b^2, a^2(a,c)(a,d)(b,c), (a,b)(a,d)(b,d)c^2, a^2(a,c)(b,d)(c,d), d^2, R_3\rangle\,,  
  \]
where $R_3=P_2(\Free(a,b,c,d))$.
\end{ex}

\subsection*{Comparison with the Legendre symbol}
We will now show how to view the classical Legendre symbol $\legendre{p}{q}$ for primes $p,q \equiv 1$ (mod 4) as a cup product.

\begin{prop} \label{legendre}
  Let $p$ and $q$ be distinct odd primes both equal to $1$ (mod 4) and let $U=\Spec \ZZ\setminus \{p,q\}$. Let $x_p$ and $x_q$ be the elements in $H^1(U,\ZZ/2\ZZ)$ corresponding to the extensions $\QQ(\sqrt{p})$ and $\QQ(\sqrt{q})$ respectively. Then $x_p\cupp x_q$ is completely determined by the Legendre symbol $\legendre{p}{q}$ and vice versa. In particular, $x_p\cupp x_q=0$ if and only if $\legendre{p}{q}=1$.
\end{prop}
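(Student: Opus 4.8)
The plan is to deduce everything from Theorem~\ref{thm:cup}, specialized to $K=\QQ$, $U=\Spec\ZZ\setminus\{p,q\}$ (so $S_f=\{p,q\}$) and $n=2$. Because $p\equiv q\equiv 1\pmod 4$, the quadratic fields $\QQ(\sqrt p)$ and $\QQ(\sqrt q)$ are ramified only at $p$ and at $q$ respectively; hence they are unramified outside $S_f$ and really do define classes $x_p,x_q\in H^1(U,\ZZ/2\ZZ)$, each coming from a cyclic extension of degree $d=2$. With $n=d=2$ the exponents in the cup product formula degenerate, $n^2/2d=1$ and $n/d=1$, so for $L=\QQ(\sqrt p)$ and $\alpha\in C_S(\QQ)[2]$ the theorem gives $\langle x_p\cupp x_q,\alpha\rangle=\langle x_q,\,\alpha\,N_{L|\QQ}(\beta)\rangle$, where $t\in L^\times$ satisfies $N_{L|\QQ}(t)=\alpha^{-2}$ and $\beta\in I_L$ satisfies $\alpha=t\beta/\sigma(\beta)$.

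First I would make the two ingredients on the right-hand side explicit. A direct computation with $C_S(\QQ)=(\RR^\times\times\QQ_p^\times\times\QQ_q^\times)/\langle -1,p,q\rangle$ shows that every $2$-torsion class is represented by an idèle with entries $\pm1$, and that $C_S(\QQ)[2]\cong(\ZZ/2\ZZ)^2$ is generated by the classes $\alpha_1$ and $\alpha_2$ of the idèles that are $-1$ at $p$ (respectively at $q$) and $1$ at every other place. Dually, under the identification of Theorem~\ref{thm:coh-groups} the functional $\langle x_q,-\rangle$ on $C_S(\QQ)/2C_S(\QQ)$ must be identified with the quadratic norm-residue character of $\QQ(\sqrt q)$, namely $\gamma\mapsto\prod_v(\gamma_v,q)_v$, a finite product of local Hilbert symbols. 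Establishing this compatibility between the Artin--Verdier pairing used throughout the paper and the reciprocity map of class field theory is the conceptual crux, and the step I expect to be the main obstacle; the remaining computations are formal once it is in place.

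Granting this, the evaluation is short. Taking $\alpha=\alpha_1$, one has $\alpha_1^2=1$ already in $I_\QQ$, so $t=1$ is admissible, and solving $\beta/\sigma(\beta)=\alpha_1$ reduces to the single ramified prime $\mathfrak p\mid p$ of $L$: there $\beta_{\mathfrak p}=\sqrt p$ works because $\sigma(\sqrt p)=-\sqrt p$, while $\beta_w=1$ at all other places. Existence of such a global $\beta$ is precisely Hilbert~90 for idèles, used exactly as in the proof of Lemma~\ref{lem:cup}. A local norm computation gives $N_{L|\QQ}(\beta)=-p$ at $p$ and $1$ elsewhere, whence $\alpha_1\,N_{L|\QQ}(\beta)$ is the idèle equal to $p$ at $p$ and $1$ elsewhere. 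Evaluating the norm-residue character yields $(p,q)_p=\legendre{q}{p}$, and quadratic reciprocity---here with no sign correction since $p\equiv q\equiv 1\pmod 4$---turns this into $\legendre{p}{q}$.

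Finally I would assemble the conclusion. Read in $\ZZ/2\ZZ$, the computation says $\langle x_p\cupp x_q,\alpha_1\rangle$ is trivial exactly when $\legendre{p}{q}=1$, which already forces $x_p\cupp x_q\neq 0$ whenever $\legendre{p}{q}=-1$. Running the identical computation on the second generator $\alpha_2$---now splitting into the subcases $q$ split or inert in $L$ according as $\legendre{p}{q}=1$ or $-1$, with $\beta$ built from $\sqrt p$ at the inert prime---gives $\langle x_p\cupp x_q,\alpha_2\rangle=\legendre{p}{q}$ as well. Thus $x_p\cupp x_q$ pairs trivially with every element of $C_S(\QQ)[2]$ precisely when $\legendre{p}{q}=1$, and nondegeneracy of the Artin--Verdier pairing converts this into the statement $x_p\cupp x_q=0$ if and only if $\legendre{p}{q}=1$, which is the assertion.
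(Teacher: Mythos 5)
Your proposal is correct and follows essentially the same route as the paper: specialize Theorem \ref{thm:cup} to $n=d=2$, take the two generators of $C_S(\QQ)[2]$ supported at $p$ and at $q$, solve $\alpha=t\beta/\sigma(\beta)$ with $\beta=\sqrt{p}$ at the ramified prime so that $\alpha N(\beta)$ becomes the idèle equal to $p$ at $p$, and interpret $\langle x_q,-\rangle$ as the norm-residue character of $\QQ(\sqrt q)$. The only cosmetic differences are that the paper phrases the final evaluation as ``$p$ is a norm from $I_{\QQ(\sqrt q)}$, i.e.\ $p$ splits in $\QQ(\sqrt q)$'' rather than via Hilbert symbols and an explicit appeal to reciprocity, and it disposes of the second generator by graded commutativity instead of rerunning the computation with split/inert subcases.
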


\begin{proof}
We have that $x_p\cupp x_q=0$ is zero if and only if $\langle x_p\cupp x_q,\alpha\rangle=0$ for all $\alpha\in H^1_c(\Ufl,\mmu_2)$, where
  \[
    \langle -,-\rangle \colon H^2(U,\ZZ/2\ZZ)\times H^1_c(\Ufl,\mmu_2)\to \QQ/\ZZ
  \]
is the pairing coming from Artin--Verdier duality.
Let $S=\{p,q\}\subset \Spec \ZZ$. Since $\QQ$ has trivial class group we have
  \[
    H^1_c(\Ufl,\mmu_2)\cong C_{\QQ,S}[2]\cong (\ZZ_p^\times \times\ZZ_q^\times)[2]\,,
  \]
which is generated by the elements $(-1,1)$ and $(1,-1)$.

First consider the case when $\alpha$ is the class represented by $(1,-1,1,1)\in I_{\QQ}=\RR^\times\times\QQ_p^\times\times \QQ_q^\times \times \Pi_{v \neq p,q, \infty} \QQ_v$. By Theorem \ref{thm:cup} we have
  \[
    \langle x_p\cupp x_q,\alpha\rangle = \langle x_q,\alpha N(\beta)\rangle
  \]
where $\beta$ is an element in $I_{\QQ[\sqrt{p}]}$ such that $\alpha=t\beta/\sigma(\beta)$, where $t\in \QQ[\sqrt{p}]^\times$ satisfies $N(t)=\alpha^{-n}$. Here $\sigma$ is the generator of $\Gal(\QQ[\sqrt{p}]/\QQ)$ and $N$ denotes the norm on idèles $N\colon I_{\QQ[\sqrt{p}]}\to I_\QQ$.
The group $I_{\QQ[\sqrt{p}]}$ will have one component above the prime $p$ and one or two components above $q$ depending on whether $q$ splits in $\QQ[\sqrt{p}]$ or not.
Let $\beta\in I_{\QQ[\sqrt{p}]}$ be the element with component over $p$ equal to $\sqrt{p}$ and with a 1 in all other components. Then $\beta/\sigma(\beta)$ has component $-1$ at the prime above $p$  and is equal to $1$ in all other components. We further have that  $\alpha N(\beta) \in I_\QQ$ is the idèle which is $p$ in the component corresponding to $p$ and is otherwise equal to $1.$ We thus have that $$ \langle x_p\cupp x_q,\alpha\rangle = \langle x_q,\alpha N(\beta)\rangle$$ is zero if and only if $p$ is in the image of the norm map $N\colon I_{\QQ[\sqrt{q}]}\to I_{\QQ}$, which is to say, if and only if $p$ splits in $\QQ(\sqrt{q}).$  \\

The case when $\alpha $ equals $ (1,1,-1,1) \in I_{\QQ,S}=\RR^\times\times\QQ_p^\times\times \QQ_q^\times \times \Pi_{v \neq p,q, \infty} \QQ_v$ follows analogously by graded commutativity of the cup product.
\end{proof}

\begin{remark}
We note that as a consequence of Proposition \ref{legendre}, the quadratic reciprocity law  $\legendre{p}{q} = \legendre{q}{p}$ then follows from graded commutativity of the cup product.
\end{remark}

\appendix

\newpage
\section{Computations}\label{app:comp}
Consider the big diagram (\ref{eq:big}). The differential $d_{\Gamma_c}^0$ is given by
  \[
    d_{\Gamma_c}^0=\begin{pmatrix}
      \sigma-1 \\
      n \\
      -\divis \\
      -\eta
    \end{pmatrix}
  \]
where
  \[
    (\sigma'-1)(a)=(\sigma(a_{n/d})/a_1,a_1/a_2,\dots,a_{(n/d)-1}/a_{n/d})\,.
  \]

  \begin{proof}[Proof of Lemma \ref{lem:three-isos}]
  The case of $s$ was shown in Section \ref{sec:cohomology-groups}.
  To prove that statement for $t$ we consider the spectral sequence of $R\Gamma_c(\Ufl,\HOM(C(\hat{u}),\C))$ obtained by filtering by columns of the corresponding double complex. This spectral sequence has page $E_1$ given by the double complex
    \[
      H^q(C(\Gamma(U,G(\HOM(C(\hat{u}),\C)))\to \Gamma(Z',\gamma^*G(\HOM(C(\hat{u}),\C)))\oplus \bigoplus_{\nu\in \Omega_\RR}\Gamma(K_\nu,a^\nu_*G(\HOM(C(\hat{u}),\C)_\nu)))[-1])^p
    \]
  for $q\geq 0$, which looks like
  \[\begin{sseq}{0...6}{0...6}
  \ssdropbull
  \ssarrow{1}{0}
  \ssdropbull
  \ssarrow{1}{0}
  \ssdropbull
  \ssarrow{1}{0}
  \ssdropbull
  \ssarrow{1}{0}
  \ssdropbull
  \ssmoveto{0}{2}
  \ssdropbull
  \ssarrow{1}{0}
  \ssdropbull
  \ssarrow{1}{0}
  \ssdropbull
  \ssarrow{1}{0}
  \ssdropbull
  \ssarrow{1}{0}
  \ssdropbull
  \end{sseq}\]
  from which it is obvious that the spectral sequence converges.
  The $\ZZ/n\ZZ$-torsor $Y$ is of the form $Y=\Ind_{C_d}^{C_n}(Y')$ for $Y'=\Spec \OO_L$ with $\OO_L$ the ring of integers of an extension $L/K$ unramified over $U$. We write $U_{Y'}=U\times_XY'$. The differential $d_1^{0,2}\colon E_1^{0,2}\to E_1^{1,2}$ is given by
    \[\Br(L)^{n/d}\to \begin{matrix}
      \Br(L)^{n/d} \\
      \oplus \\
      \Br(L)^{n/d} \\
      \oplus \\
      \bigoplus_{p\in U_{Y'}}\Br(L_p)^{n/d} \\
      \oplus \\
      \bigoplus_{w\in S'}\Br(L_w)^{n/d} \\
      \oplus \\
      \bigoplus_{v\in \Omega_\RR}\Br(L_v)^{n/d}
    \end{matrix}\]
  which is injective since
    \[\Br(L)^{n/d}\to \begin{matrix}
    \bigoplus_{v\in U_{Y'}}\Br(L_v)^{n/d} \\
    \oplus \\
    \bigoplus_{w\in S'}\Br(L_w)^{n/d}
    \end{matrix}\cong \bigoplus_{v\in Y'}\Br(L_v)^{n/d}\]
  is just $(\inv_v)_{v}^{n/d}$ which we know is injective (see Appendix \ref{app-coh-of-id}). Hence the $E_2$-page will look like
  \[\begin{sseq}{0...6}{0...6}
  \ssdropbull
  \ssmoveto{1}{0}
  \ssdropbull
  \ssmoveto{2}{0}
  \ssdropbull
  \ssmoveto{3}{0}
  \ssdropbull
  \ssmoveto{4}{0}
  \ssdropbull
  \ssmoveto{1}{2}
  \ssdropbull
  \ssmoveto{2}{2}
  \ssdropbull
  \ssmoveto{3}{2}
  \ssdropbull
  \ssmoveto{4}{2}
  \ssdropbull
  \end{sseq}\]
  and if we look at the filtration associated to this spectral sequence, the index $(j,2)$ does not contribute before level $3$ if $j\geq 1$.
  This shows that $t$ is an isomorphism.

  The fact that the leftmost and right most vertical zig-zags of (\ref{eq:three-isos}) gives isomorphisms on cohomology in degree 2 follows from the spectral sequence argument on page 5, together with the fact that the morphism of complexes 
    \[
      \begin{tikzcd}
        K^\times \ar{r}{d^{0}}\ar{d} & K^\times\oplus \Div U\oplus\prod_{p\in S_f} K_p^\times \oplus \prod_{\nu\in \Omega_\RR}K^\times_\nu\ar{d} \ar{r}{d^{1}} &\Div U\oplus \prod_{p\in S_f} K_p^\times \oplus \prod_{\nu\in \Omega_\RR}K^\times_\nu\ar{d} \\
        K^\times \ar{r}{d^{0}} & K^\times\oplus \Div U\oplus\prod_{p\in S_f} K_p^\times \oplus \prod_{\nu\in \Omega_\RR}K^\times_\nu/2K^\times_\nu \ar{r}{d^{1}} & \Div U\oplus \prod_{p\in S_f} K_p^\times \oplus \prod_{\nu\in \Omega_\RR}K^\times_\nu/2K^\times_\nu \,,
      \end{tikzcd}  
    \]
    where
    \[
        d^{0} =
        \begin{pmatrix}
          -n \\
          \divis \\
          \eta \\
          (\iota_\nu)_\nu
        \end{pmatrix}\mbox{ and }
        d^{1} =
        \begin{pmatrix}
        \divis & n & 0 & 0 \\
        \eta & 0 & n & 0 \\
        (\iota_\nu)_\nu & 0 & 0 & n
        \end{pmatrix}\,,
    \]
  gives isomorphisms on $H^1$ and $H^2$. 

  The statement about $t'$ and $\Gamma_c(q(\hat{u})^*)$ follows by the 2-out-of 3 property of isomorphisms since the functors $R\Gamma_c(\Ufl, -)$ and $R\hat{\Gamma}_c(\Ufl, -)$ preserve quasi-isomorphisms. 
  \end{proof}

\section{Cohomology of idèles}\label{app-coh-of-id}
Most results of the following section are standard and can be found in most books on class field theory. Our main references is \cite{Cassels--Frohlich}.
Let $L/K$ be a Galois extension of number fields with Galois group $G$. For every prime $v$ in $K$ we let $L^v=L_w$ for some (any) prime $w$ over $v$ and we write $G^v=\Gal(L^v/K_v)$.

\begin{lem}[{\cite[VII.7.3.(b)]{Cassels--Frohlich}}]
  We have an isomorphism
    \[H^{i}(G,I_L)\cong \bigoplus_{v}H^{i}(G^v,(L^v)^\times)\]
  for every $i\in \ZZ$.
\end{lem}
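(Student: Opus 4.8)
The plan is to exhibit $I_L$ as a filtered colimit of induced $G$-modules and then reduce the computation to the local groups $G^v$ via Shapiro's lemma; throughout, for $i\in\ZZ$ we read $H^i(G,-)$ as Tate cohomology. First I would write
\[
  I_L=\varinjlim_T I_{L,T}, \qquad
  I_{L,T}=\prod_{v\in T}\Big(\prod_{w\mid v}L_w^\times\Big)\times\prod_{v\notin T}\Big(\prod_{w\mid v}\OO_w^\times\Big),
\]
where $T$ runs over the finite sets of places of $K$ that contain the archimedean places and all places ramifying in $L/K$. Any idèle has non-integral components at only finitely many $w$, lying over finitely many $v$, so the directed union of the $I_{L,T}$ is indeed $I_L$. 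Each $I_{L,T}$ is a $G$-submodule because $G$ permutes the primes $w$ lying over a fixed $v$.

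Next I would identify the local factors as induced modules. For a fixed place $v$ of $K$, the group $G$ acts transitively on $\{w\mid v\}$ with stabilizer $G^v=\Gal(L^v/K_v)$, so
\[
  \prod_{w\mid v}L_w^\times\cong\Ind_{G^v}^G (L^v)^\times
  \qquad\text{and}\qquad
  \prod_{w\mid v}\OO_w^\times\cong\Ind_{G^v}^G \OO_{L^v}^\times .
\]
Since each $I_{L,T}$ is a finite product of such induced modules, Shapiro's lemma yields
\[
  H^i(G,I_{L,T})\cong
  \bigoplus_{v\in T}H^i\big(G^v,(L^v)^\times\big)\ \oplus\ \bigoplus_{v\notin T}H^i\big(G^v,\OO_{L^v}^\times\big).
\]

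The decisive input, and the step carrying the real content, is that for every $v\notin T$ — hence unramified in $L/K$ — the unit group $\OO_{L^v}^\times$ is a cohomologically trivial $G^v$-module, so $H^i(G^v,\OO_{L^v}^\times)=0$ for all $i\in\ZZ$. Granting this standard fact, the second sum above vanishes identically, and when $T\subseteq T'$ the transition map simply replaces each vanishing group $H^i(G^v,\OO_{L^v}^\times)$ by $H^i(G^v,(L^v)^\times)$ as $v$ enters $T'$, while acting as the identity on the summands already present.

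Finally I would pass to the colimit: because $G$ is finite, a complete resolution of $\ZZ$ over $\ZZ[G]$ is finitely generated in each degree, so Tate cohomology commutes with filtered colimits of coefficient modules, giving
\[
  H^i(G,I_L)=\varinjlim_T H^i(G,I_{L,T})=\bigoplus_v H^i\big(G^v,(L^v)^\times\big),
\]
which is the claimed isomorphism. Here the right-hand side is a genuine, in general infinite, direct sum, the unramified places still contributing through $H^i(G^v,(L^v)^\times)\cong H^i(G^v,\ZZ)$ via the valuation splitting. The only real obstacle is the cohomological triviality of $\OO_{L^v}^\times$ at unramified places combined with the commutation of cohomology with the colimit; both are classical, which is precisely why the statement can be quoted from \cite{Cassels--Frohlich}.
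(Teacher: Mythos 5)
Your proof is correct and is precisely the classical argument behind the result the paper quotes from Cassels--Fr\"ohlich: write $I_L$ as a filtered colimit of the $G$-modules $I_{L,T}$, identify each local factor as $\Ind_{G^v}^G$ of the corresponding local module, apply Shapiro's lemma, and kill the unramified unit contributions by cohomological triviality before passing to the colimit. The paper gives no independent proof but simply cites \cite[VII.7.3.(b)]{Cassels--Frohlich}, whose proof is essentially the one you have written out.
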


If we consider the cohomology sequence associated to the short exact sequence
  \[0\to L^\times\to I_L\to C_L\to  0\]
and using the fact that $H^1(G,C_L)=0$ \cite[VII.9.1.(2)]{Cassels--Frohlich} we get an injection
  \[H^2(G,L^\times)\to \bigoplus_{v}H^{i}(G^v,(L^v)^\times)\,.\]

If we consider the limit over all finite sub-extensions $K\subseteq L\subseteq K^{sep}$, we get the following result:

\begin{prop}
  We have a canonical injection
    \[\Br(K)\to \bigoplus_{v}\Br(K_v)\,.\]
\end{prop}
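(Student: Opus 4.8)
The plan is to obtain the proposition by passing to the filtered colimit in the injection
\[
H^2(G,L^\times)\hookrightarrow \bigoplus_{v}H^2(G^v,(L^v)^\times)
\]
established just above, as $L$ ranges over the directed system of finite Galois extensions of $K$ inside a fixed separable closure $K^{sep}$. First I would recall the standard description of the Brauer group as Galois cohomology computed by a colimit: since the Galois cohomology of the profinite group $\Gal(K^{sep}/K)$ is the colimit of the cohomology of its finite quotients, one has $\Br(K)=H^2(\Gal(K^{sep}/K),(K^{sep})^\times)=\varinjlim_{L}H^2(\Gal(L/K),L^\times)$, the transition maps being the inflations. The same description holds locally: $\Br(K_v)=\varinjlim H^2(\Gal(E/K_v),E^\times)$ over finite Galois $E/K_v$.

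Next I would check that the \emph{local} colimit is already computed by the subsystem of completions $L^v=L_w$. Since $K$ is dense in $K_v$, every finite separable extension of $K_v$ arises as a completion $L_w$ of some finite extension $L/K$ at a place $w\mid v$ (by weak approximation together with Krasner's lemma), so the extensions $L^v$ are cofinal among all finite extensions of $K_v$. Consequently $\varinjlim_{L}H^2(G^v,(L^v)^\times)=\Br(K_v)$ for each place $v$ of $K$, the value being independent of the choice of $w\mid v$ up to the canonical isomorphism between conjugate decomposition groups.

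Finally I would assemble the limit. The localization maps $H^2(G,L^\times)\to H^2(G^v,(L^v)^\times)$ are natural in $L$, that is, they commute with inflation, so the injections above form a morphism of directed systems. Filtered colimits are exact in the category of abelian groups, hence preserve injectivity, and they commute with the direct sum over $v$ (both being colimits); taking the colimit therefore produces the desired injection $\Br(K)\hookrightarrow\bigoplus_{v}\Br(K_v)$.

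The hard part is not the colimit formalism but the \emph{naturality} feeding into it: one must verify that the Shapiro-type decomposition $H^2(G,I_L)\cong\bigoplus_{v}H^2(G^v,(L^v)^\times)$ and the localization maps are compatible with the inflation maps of the directed system, and that the cofinality of the completions $L^v$ holds as claimed. Once this compatibility is in place, exactness of filtered colimits makes the conclusion immediate.
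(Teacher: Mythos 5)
Your proposal is correct and follows essentially the same route as the paper: both pass to the filtered colimit over finite Galois subextensions $L/K$ of the injection $H^2(G,L^\times)\hookrightarrow\bigoplus_v H^2(G^v,(L^v)^\times)$ obtained from $H^1(G,C_L)=0$ and the Shapiro-type decomposition of $H^2(G,I_L)$. The paper is terser (it phrases the colimit at the level of $I_{K^{sep}}$ and absorbs the cofinality of the completions $L^v$ into the identification $((K^{sep})^v)^\times\cong((K_v)^{sep})^\times$), but the substance — exactness of filtered colimits plus compatibility with the localization maps — is the same.
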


\begin{proof}
  Taking the limit over all finite sub-extensions $K\subseteq L\subseteq K^{sep}$, we get an injection
    \[H^2(\Gal(K^{sep}/K),(K^{sep})^\times)\to H^2(\Gal(K^{sep}/K),I_{K^{sep}})\]
  and the result follows since $\Br(K)\cong H^2(\Gal(K^{sep}/K),(K^{sep})^\times)$ and
    \[\begin{split}
    H^2(\Gal(K^{sep}/K),I_{K^{sep}})
    & \cong \bigoplus_{v}H^{i}(\Gal((K^{sep})^v/K_v),((K^{sep})^v)^\times) \\
    & \cong \bigoplus_{v}H^{i}(\Gal((K^{sep})^v/K_v),((K_v)^{sep})^\times) \\
    & \cong \bigoplus_{v}\Br(K_v)\,.
    \qedhere\end{split}\]
\end{proof}

\bibliographystyle{dary}
\bibliography{cupproducts}{}

\providecommand{\MR}{\relax\ifhmode\unskip\space\fi MR }
\providecommand{\MRhref}[2]{%
  \href{http://www.ams.org/mathscinet-getitem?mr=#1}{#2}
}
\providecommand{\href}[2]{#2}
\begin{thebibliography}{{The}22}

\bibitem[AC23]{Ahlqvist--Carlson-cup}
Eric Ahlqvist and Magnus Carlson, \emph{The \'{e}tale cohomology ring of the
  ring of integers of a number field}, Res. Number Theory \textbf{9} (2023),
  no.~3, Paper No. 58, 29.

\bibitem[AC25]{AhlqvistCarlson+2025}
Eric Ahlqvist and Magnus Carlson, \emph{Massey products in the {\'e}tale
  cohomology of number fields}, J. Reine Angew. Math. \textbf{823} (2025),
  61--112 (English).

\bibitem[BW17]{Boston--Wang}
Nigel Boston and Jiuya Wang, \emph{The $2$-class tower of
  $\mathbb{Q}(\sqrt{-5460})$}, 2017,
  \href{http://arXiv.org/abs/1710.10681}{\mbox{arXiv:1710.10681}}.

\bibitem[CF67]{Cassels--Frohlich}
J.~W.~S. Cassels and A.~Fr\"{o}hlich (eds.), \emph{Algebraic number theory},
  Academic Press, London; Thompson Book Co., Inc., Washington, DC, 1967.

\bibitem[CS17]{CarlsonSchlankUnramified}
Magnus Carlson and Tomer~M. Schlank, \emph{The unramified inverse galois
  problem and cohomology rings of totally imaginary number fields}, 2017,
  \href{http://arXiv.org/abs/1612.01766}{\mbox{arXiv:1612.01766}}.

\bibitem[DH19]{Demarche--Harari}
Cyril Demarche and David Harari, \emph{Artin-{M}azur-{M}ilne duality for fppf
  cohomology}, Algebra Number Theory \textbf{13} (2019), no.~10, 2323--2357.

\bibitem[GM03]{Gelfand--Manin}
Sergei~I. Gelfand and Yuri~I. Manin, \emph{Methods of homological algebra},
  second ed., Springer Monographs in Mathematics, Springer-Verlag, Berlin,
  2003.

\bibitem[GS18]{Geisser--Schmidt}
Thomas~H. Geisser and Alexander Schmidt, \emph{Poitou-{T}ate duality for
  arithmetic schemes}, Compos. Math. \textbf{154} (2018), no.~9, 2020--2044.

\bibitem[Has31]{HasseNorm}
H.~Hasse, \emph{Beweis eines {S}atzes und {W}iederlegung einer {V}ermutung
  {\"u}ber das allgemeine {N}ormenrestsymbol}, Nachrichten von der Gesellschaft
  der Wissenschaften zu G{\"o}ttingen, Mathematisch-Physikalische Klasse
  \textbf{1931} (1931), 64--69.

\bibitem[Hir19]{Hirano}
Hikaru Hirano, \emph{{On mod 2 arithmetic Dijkgraaf-Witten invariants for
  certain real quadratic number fields}}, 2019,
  \href{http://arXiv.org/abs/1911.12964}{\mbox{arXiv:1911.12964}}.

\bibitem[Joh02]{JohnstoneElephant2}
Peter~T. Johnstone, \emph{Sketches of an elephant: a topos theory compendium.
  {V}ol. 2}, Oxford Logic Guides, vol.~44, The Clarendon Press, Oxford
  University Press, Oxford, 2002.

\bibitem[Koc02]{Koch-Galois}
Helmut Koch, \emph{Galois theory of {$p$}-extensions}, Springer Monographs in
  Mathematics, Springer-Verlag, Berlin, 2002, With a foreword by I. R.
  Shafarevich, Translated from the 1970 German original by Franz Lemmermeyer,
  With a postscript by the author and Lemmermeyer.

\bibitem[Mai17]{MaireUnramified}
Christian Maire, \emph{Unramified 2-extensions of totally imaginary number
  fields and 2-adic analytic groups}, 2017,
  \href{http://arXiv.org/abs/1710.09217}{\mbox{arXiv:1710.09217}}.

\bibitem[McL08]{Mcleman}
Cameron McLeman, \emph{{$p$}-tower groups over quadratic imaginary number
  fields}, Ann. Sci. Math. Qu\'{e}bec \textbf{32} (2008), no.~2, 199--209.

\bibitem[Mil80]{MilneEtale}
James~S. Milne, \emph{\'{E}tale cohomology}, Princeton Mathematical Series,
  vol.~33, Princeton University Press, Princeton, N.J., 1980.

\bibitem[Mil06]{MilneADT}
James~S. Milne, \emph{Arithmetic duality theorems}, second ed., BookSurge, LLC,
  Charleston, SC, 2006.

\bibitem[MS03]{McCallum--Sharifi}
William~G. McCallum and Romyar~T. Sharifi, \emph{A cup product in the {G}alois
  cohomology of number fields}, Duke Math. J. \textbf{120} (2003), no.~2,
  269--310.

\bibitem[Neu99]{Neukirch-ANT}
J\"{u}rgen Neukirch, \emph{Algebraic number theory}, Grundlehren der
  Mathematischen Wissenschaften [Fundamental Principles of Mathematical
  Sciences], vol. 322, Springer-Verlag, Berlin, 1999, Translated from the 1992
  German original and with a note by Norbert Schappacher, With a foreword by G.
  Harder.

\bibitem[Neu13]{Neukirch-Bonn}
J\"{u}rgen Neukirch, \emph{Class field theory}, Springer, Heidelberg, 2013, The
  Bonn lectures, edited and with a foreword by Alexander Schmidt, Translated
  from the 1967 German original by F. Lemmermeyer and W. Snyder, Language
  editor: A. Rosenschon.

\bibitem[NSW08]{Neukirch--Schmidt--Wingberg}
J\"{u}rgen Neukirch, Alexander Schmidt, and Kay Wingberg, \emph{Cohomology of
  number fields}, second ed., Grundlehren der Mathematischen Wissenschaften
  [Fundamental Principles of Mathematical Sciences], vol. 323, Springer-Verlag,
  Berlin, 2008.

\bibitem[Ols16]{Olsson-Book}
Martin Olsson, \emph{Algebraic spaces and stacks}, American Mathematical
  Society Colloquium Publications, vol.~62, American Mathematical Society,
  Providence, RI, 2016.

\bibitem[{Sta}21]{stacks-project}
The {Stacks project authors}, \emph{The stacks project},
  \url{https://stacks.math.columbia.edu}, 2021.

\bibitem[{The}22]{PARI2}
{The PARI~Group}, Univ. Bordeaux, \emph{{PARI/GP version \texttt{2.13.4}}},
  2022, available from \url{http://pari.math.u-bordeaux.fr/}.

\end{thebibliography}
\end{document}